\newtheorem{theorem}{Theorem}[section]
\newtheorem{example}[theorem]{Example}
\newtheorem{lemma}[theorem]{Lemma}
\newtheorem{proposition}[theorem]{Proposition}
\newenvironment{proof}[1][Proof]{\textbf{#1.} }{\ \rule{0.5em}{0.5em}}
\renewcommand{\geq}{\geqslant}
\def\leq{\leqslant}
\def\1{{\mathbf{1}}}
\def\1{{\mathbf{1}}}
\def\0.5{{\frac{1}{2}}}
\renewcommand{\thefootnote}{\fnsymbol{footnote}}
\begin{document}
\begin{center}
{\Large \textbf{Volatility Estimation of General Gaussian Ornstein-Uhlenbeck Process}} \\[0pt]
~\\[0pt]
Salwa Bajja\footnote{ National School of Applied Sciences -
Marrakesh, Cadi Ayyad University, Marrakesh, Morocco. Email:
\texttt{salwa.bajja@gmail.com }},
Qian Yu\footnote{ School of Statistics, East China Normal University, Shanghai, 200241, China.
 Email: \texttt{qyumath@163.com}}
 \\[0pt]
\end{center}

\begin{abstract}
\medskip In this article we study the asymptotic behaviour of the realized
quadratic variation of a process $\int_{0}^{t}u_{s}dG^{H}_{s}$,
where $u$ is a $\beta$-H\"older continuous process with $\beta >1-H$ and $G^H$ is a self-similar Gaussian process with parameters $H\in(0,3/4)$.
We prove almost sure convergence uniformly in time, and a stable weak convergence for the realized quadratic variation. As an application, we construct strongly consistent estimator for the integrated volatility parameter in a model driven by $G^H$.
\end{abstract}

\renewcommand{\thefootnote}{\arabic{footnote}} \renewcommand{\thefootnote}{%
\arabic{footnote}} \noindent {\textbf{Keywords:}} Gaussian process; Quadratic variation; Stable convergence;
Volatility.

\section{Introduction}

The realized quadratic variation is a powerful tool in the statistical analysis of stochastic processes, and it has received
a lot of attention in the literature. Furthermore, its generalization, the realized power variation of order $p>0$, have
received similar attention as it can tackle with several problems related to realized quadratic variation. For example, the
asymptotic normality does not hold for realized quadratic variation in the case of the fractional Brownian motion (fBm) $B^H$ with
$H>\frac34$, while asymptotic normality hold for realized power variation if one chooses $p$ large enough. Many results are limited
to the fBm who has the stationary increment, but not to the general non-stationary Gaussian process.

The realized power variation of order $p$ (quadratic variation if
$p=2$) is defined as
\begin{equation}  \label{eq:statistic}
\sum_{i=1}^{[nt]}\left|X_{i/n}-X_{(i-1)/n}\right|^p
\end{equation}
where $\{X_{t},t\geq0\}$ is a stochastic process. It was originally
introduced in Barndorff-Nielsen and Shephard (\cite{BS2002}, \cite{BS2003}, \cite{BS2004a},\cite{BS2004b}) to estimate the integrated
volatility in some stochastic volatility models used in quantitative finance and also, under an appropriate modification,
to estimate the jumps of the processes. The main interest in the mentioned papers is the asymptotic behaviour of appropriately
normalised version of the statistic (\ref{eq:statistic}), when the process $X_{t}$ is a stochastic integral with respect to a
Brownian motion. Refinements of the results have been obtained in \cite{W2003} and \cite{W2005}, and further extensions can be found
in \cite{BSal}.

The asymptotic behaviour of the power variation of a stochastic integral $Z_{t}=\int_{0}^{t}u_{s}dB_{s}^{H}$ with respect to a fBm was studied in \cite{CNW}. In \cite{CNW} the authors proved that if $u=\{u_{t},t\geq0\}$ has finite $q$-variation for some $q<1/(1-H)$,
then
\begin{eqnarray}
n^{-1+pH}V_{p}^{n}(Z)_{t} &\longrightarrow&
c_{1,p}\int_{0}^{t}|u_{s}|^{p}ds
\end{eqnarray}
uniformly in probability in any compact sets of t, where $c_{1,p}=\mathrm{I%
\kern-0.16em E}[|B_{1}^{H}|^{p}]$ and  $V_{p}^{n}(Z)_{t}=\sum_{i=1}^{[nt]}\left|Z_{i/n}-Z_{(i-1)/n}\right|^p.$ The authors also proved central
limit
theorem for $H \in (0,\frac{3}{4}]$. However, the condition $H\in(0,\frac{3}{%
4}]$ is critical in \cite{CNW}. The first objective of \cite{HNZ}
was to remove this restriction. They used higher order differences
and defined the
power variation as $V_{k,p}^{n}(Z)_{t}=\sum_{i=1}^{[nt]-k+1}\left|%
\sum_{j=0}^{k}(-1)^{k-j}C_{j}^{k}Z_{(i+j-1)/n}\right|^{p}$ for
certain numbers $C_j^k$.

On a related literature we mention also a series of articles, all by the same authors, studying power variations of
general Gaussian processes. In \cite{BCP2009} asymptotic theory for the realized power variation of the processes $\phi(G)$ was studied.
Here $G$ is a general Gaussian process with stationary increments, and $\phi$ is a deterministic function. The authors proved that
under some mild assumptions on the variance function of the increments of $G$ and certain regularity conditions on the path of
the process, a properly normalised converge uniformly in probability. Exploiting these ideas, central limit theorems and
convergence of (multi) power variations for the general Gaussian processes with stationary increments and Gaussian semistationary
processes was studied in \cite{BCPW2009} and \cite{BCP2011}. Finally, similar questions for variations based on higher order
differences were studied in \cite{BCP2013}. As an application, estimation of the smoothness parameter of the process was discussed.

While the literature on the topic is wide due to the centrality of the problem,
all of the mentioned studies consider only (uniform) convergence in probability.
To the best of our knowledge, stronger mode of convergence such as uniform almost sure convergence is not widely studied in the literature.
In the paper \cite{BEV}, they studied the asymptotic behaviour of
the
realized quadratic variation of a process of the form $\int_{0}^{t}u_{s}dY^{(1)}_{s}$, where $Y_{t}^{(1)}=\int_{0}^{t}e^{-s}dB_{a_{s}}$; $a_{t}=He^{t/H}$, $B^{H}$ is a fBm with Hurst parameter $H\in(0,1)$, and $u$ is a $\beta$-H\"older continuous process with $\beta > 1-H$. such that the process $Y^{(1)}$ is connected to the fractional Ornstein-Uhlenbeck process of the second kind, that is defined through the Lamperti transform
of the fBm. Equivalently, fractional Ornstein-Uhlenbeck process of the second kind can be defined as
the solution to the stochastic differential equation \begin{eqnarray}
dX_{t} &=& -\theta X_{t}dt+\sigma_{t} dY^{(1)}_{t}.
\end{eqnarray}
As the main result, they obtained the almost sure and uniform
convergence. In comparison, \cite{CNW} obtained uniform
convergence in probability. They
also established weak convergence result provided that $H\in\left(0,\frac34%
\right)$.

In this paper we study the asymptotic behaviour of the realized
quadratic variation of a process of the form $\int_{0}^{t}u_{s}dG^{H}_{s}$, where $%
G^{H}$ is a self-similar Gaussian process (including fBm $B^H$, sub-fBm $S^H$ and bi-fBm $B^{H_0,K_0}$) with parameter $H\in(0,3/4)$ ($H=H_0K_0$ for bi-fBm) and $u$ is a $\beta$-H\"{o}lder continuous process with $\beta > 1-H$.  The Guaussian Ornstein-Uhlenbeck process can be
defined as the solution to the stochastic differential equation
\begin{eqnarray}
dX_{t} &=& -\theta X_{t}dt+\sigma_{t} dG^{H}_{t}.
\end{eqnarray}

As our main result, we obtain almost sure and uniform convergence of the realized quadratic variation of the self-similar Gaussian process $G^{H}$. That is, we  show that for $Z_t = \int_{0}^{t}u_{s}dG^{H}_{s}$ we have
$$
\sum_{i=1}^{[nt]}\left|Z_{\frac{i}{n}}-Z_{\frac{i-1}{n}%
}\right|^2 \longrightarrow \int_{0}^{t}|u_{s}|^2ds
$$
almost surely and uniformly in $t$, for any $H\in(0,3/4)$ and any process $u$ that is regular enough. In order to obtain this stronger convergence, we apply recently developed simplified method \cite{Lauri Viitasaari} to study quadratic variations of Gaussian sequence. With this simplified method that is based
on a concentration phenomena, one is able to obtain stronger convergence at the same time.

To obtain the desired results, we make the following assumptions on the self-similar Gaussian process $G^H$:

\textbf{(A1)} Let $d(s,t)=\mathbb{E}(G_t^H-G_s^H)^2$ is in $C^{1,1}$ outside diagonal, which satisfies
$$|\partial_{s,t}d(s,t)|=O(|t-s|^{2H-2}).$$

\textbf{(A2)} $G^H$ is H\"{o}lder continuous of  order $\delta$ for any $0<\delta<H$.

\textbf{(A3)} Let $I_n(i)=\{j: ~\frac{j}{m}\in(\frac{i-1}{m},\frac{i}{m}]\}$. As $m\to\infty$,
$$m^{-1+2H}\sum_{j\in I_n(i)}\mathbb{E}|G^H_{j/m}-G^H_{(j-1)/m}|^2\to\frac1n.$$

\textbf{(A4)} For $j, l=1,2,\cdots, N$, there exist constants $c_0$ and $c_1$ such that
$$\mathbb{E}[(G^H_{j}-G^H_{j-1})(G^H_{l}-G^H_{l-1})]=c_0\rho_H(|j-l|)+c_1\theta(j,l)$$
where $\rho_H(x)=\frac12\Big[(x+1)^{2H}+(x-1)^{2H}-2x^{2H}\Big]$ and $|\theta(j,l)|^2=o(1/j)$ as $j\to\infty$ (or equal to $o(1/l)$ as $l\to\infty$).\\

Note that, assumptions (A1)--(A3) mainly used in the proof of consistency in Theorem \ref{theorem:Consistency}. Condition of $\theta(j,l)$ in (A4) such that for $m\geq2$, $$\lim_{N\to\infty}\frac1N\sum_{j,l}\Big|\mathbb{E}[(G^H_{j}-G^H_{j-1})(G^H_{l}-G^H_{l-1})]\Big|^m<\infty$$ which will given in the main proof of stable convergence in Theorem \ref{theorem:distribution asymptotic}.

The paper is outlined in the following way. After some preliminaries in Section 2, Section 3 is devoted to the proof of main results, based on the assumptions (A1)--(A4) in Section 1 and the Lemmas and Theorems given in Section 2. We apply our results to the estimation of the integrated volatility in Section 4.

Throughout this paper, if not mentioned otherwise, the letter $c$, with or without a subscript, denotes a generic positive finite constant and may change
from line to line.
\section{Preliminaries}

In this paper, we will consider  $\{G_t^H, t\geq0\}$ is a centered Gaussian process defined on some probability space $(\Omega, \mathcal{F}, P)$ with self-similar index $H\in(0,3/4)$.  We always assume that $G^H$ satisfies assumptions (A1)--(A4). This  conditions that are satisfied by a variety of Gaussian processes. In particular, it is straightforward to validate the following Gaussian processes.

\begin{example}\label{ex-fBm}
$G^H_t=B_t^H$ is a fBm, of which the covariance function is
$$\mathbb{E}(B_t^HB_s^H)=\frac12(t^{2H}+s^{2H}-|t-s|^{2H}).$$
\end{example}

\begin{example}\label{ex-sub-fBm}
$G^H_t=S_t^H$ is a sub-fBm, of which the covariance function is
$$\mathbb{E}(S_t^HS_s^H)=t^{2H}+s^{2H}-\frac12[(t+s)^{2H}+|t-s|^{2H}].$$
\end{example}

\begin{example}\label{ex-bi-fBm}
$G^H_t=B_t^{H_0,K_0}$ is a bi-fBm, of which the covariance function is
$$\mathbb{E}(B_t^{H_0,K_0}B_s^{H_0,K_0})=2^{-K_0}[(t^{2H_0}+s^{2H_0})^{K_0}-|t-s|^{2H_0K_0}],$$
\end{example}
where $H=H_0K_0\in(0,3/4)$ and $K_0\in(0,1]$.\\

Next, we are going to verify that these processes meet the assumptions (A1)--(A4).

\begin{lemma}\label{lem-fBm}
Assumptions (A1)--(A4) are satisfied by fBm.
\end{lemma}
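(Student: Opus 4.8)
The plan is to compute the canonical variance function $d(s,t)=\mathbb{E}(B^H_t-B^H_s)^2$ in closed form and then verify (A1)--(A4) one at a time, since for fBm every quantity appearing in the assumptions is explicit. From the covariance in Example \ref{ex-fBm} one gets immediately $d(s,t)=t^{2H}+s^{2H}-2\cdot\tfrac12(t^{2H}+s^{2H}-|t-s|^{2H})=|t-s|^{2H}$, so the entire verification reduces to elementary manipulations of the map $x\mapsto x^{2H}$ together with one standard Kolmogorov argument.

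For (A1) I would restrict to $t>s$, write $d(s,t)=(t-s)^{2H}$, and differentiate twice: $\partial_t d=2H(t-s)^{2H-1}$ and then $\partial_s\partial_t d=-2H(2H-1)(t-s)^{2H-2}$, which is continuous off the diagonal and satisfies $|\partial_{s,t}d(s,t)|=2H|2H-1|\,|t-s|^{2H-2}=O(|t-s|^{2H-2})$, establishing both the $C^{1,1}$ regularity and the required bound. For (A2), since $B^H$ is Gaussian with $\mathbb{E}|B^H_t-B^H_s|^2=|t-s|^{2H}$, all even moments scale as $\mathbb{E}|B^H_t-B^H_s|^p=c_p|t-s|^{pH}$; applying the Kolmogorov continuity criterion and letting $p\to\infty$ yields Hölder continuity of every order $\delta<H$. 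For (A3) I would use that each fine increment has the exact variance $\mathbb{E}|B^H_{j/m}-B^H_{(j-1)/m}|^2=m^{-2H}$, so the left-hand side collapses to $m^{-1}\,\#I_n(i)$; counting the fine grid points falling in the relevant subinterval gives $\#I_n(i)=m/n+O(1)$, whence the limit is exactly $1/n$. The only point demanding mild care here is this counting of grid points.

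For (A4) the idea is to show that the increment sequence $\Delta_j=B^H_j-B^H_{j-1}$ is precisely the stationary fractional Gaussian noise. Expanding
$$\mathbb{E}[\Delta_j\Delta_l]=\mathbb{E}[B^H_jB^H_l]-\mathbb{E}[B^H_jB^H_{l-1}]-\mathbb{E}[B^H_{j-1}B^H_l]+\mathbb{E}[B^H_{j-1}B^H_{l-1}]$$
via the covariance formula, all the polynomial contributions $j^{2H},\,(j-1)^{2H},\,l^{2H},\,(l-1)^{2H}$ cancel and one is left only with the $|\cdot|^{2H}$ terms, giving $\tfrac12\big[(|j-l|+1)^{2H}+(|j-l|-1)^{2H}-2|j-l|^{2H}\big]=\rho_H(|j-l|)$. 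Hence (A4) holds with $c_0=1$, $c_1=0$ and $\theta\equiv0$, so the decay condition $|\theta(j,l)|^2=o(1/j)$ is satisfied vacuously.

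The computation is almost entirely routine, so there is no serious obstacle; the only step carrying genuine content is the telescoping cancellation in (A4), and the only point where one must be slightly attentive is the grid-point count in (A3). Everything else follows directly from the closed form $d(s,t)=|t-s|^{2H}$.
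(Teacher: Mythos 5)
Your proof is correct and follows essentially the same route as the paper: the closed form $d(s,t)=|t-s|^{2H}$ gives (A1)--(A2), the exact increment variance plus a grid-point count gives (A3), and the telescoping of the covariance formula identifies the increment covariance as $\rho_H(|j-l|)$ so that (A4) holds with $\theta\equiv 0$. You merely spell out a few steps the paper leaves implicit (the explicit mixed derivative, the Kolmogorov argument, and the $m/n+O(1)$ counting, where the paper writes the limit as an exact identity).
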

\begin{proof}
For $t,s >0$,
$$d(s,t)=|t-s|^{2H}$$
which gives (A1) and (A2).

Since the fBm has the incremental stationarity, then

$$\mathbb{E}[(B^H_{j}-B^H_{j-1})(B^H_{l}-B^H_{l-1})]=\rho_H(|j-l|),$$
where $\rho_H(x)=\frac12[|x+1|^{2H}+|x-1|^{2H}-2|x|^{2H}]$.  This gives (A4).

For (A3),
\begin{align*}
m^{-1+2H}\sum_{j\in I_n(i)}\mathbb{E}|G^H_{j/m}-G^H_{(j-1)/m}|^2&=m^{-1}\sum_{j\in I_n(i)}\mathbb{E}[(B^H_{j}-B^H_{j-1})^2\\
&=m^{-1}\sum_{j\in I_n(i)}\rho_H(0)\\
&=\frac{i}{n}-\frac{i-1}{n}=\frac1n.
\end{align*}
This completes proof.
\end{proof}

\begin{lemma}\label{lem-sub-fBm}
Assumptions (A1)--(A4) are satisfied by sub-fBm.
\end{lemma}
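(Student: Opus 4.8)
The plan is to mirror the fBm computation of Lemma~\ref{lem-fBm}, isolating the extra ``sum'' terms that distinguish the sub-fBm covariance from that of the fBm and showing they are always of lower order. First I would compute the increment variance: from the covariance in Example~\ref{ex-sub-fBm}, a direct expansion of $\mathbb{E}(S_t^H-S_s^H)^2$ gives, for $s<t$,
$$d(s,t)=(t-s)^{2H}+(t+s)^{2H}-2^{2H-1}\big(t^{2H}+s^{2H}\big).$$
The first summand is exactly the fBm variance, so the remaining ``sum'' part is the genuinely new contribution to be controlled throughout.

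For (A1), differentiating away from the diagonal yields
$$\partial_{s,t}d(s,t)=2H(2H-1)\big[(t+s)^{2H-2}-(t-s)^{2H-2}\big],$$
and since $0\le t+s$ forces $(t+s)^{2H-2}\le(t-s)^{2H-2}$ when $2H-2<0$, both terms are $O(|t-s|^{2H-2})$, giving (A1). For (A2), I would bound $d(s,t)\le c\,|t-s|^{2H}$ (using the standard comparison $\mathbb{E}(S_t^H-S_s^H)^2\asymp|t-s|^{2H}$ for sub-fBm) and invoke the Kolmogorov continuity theorem for the centered Gaussian process $S^H$ to obtain Hölder continuity of every order $\delta<H$.

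The heart of the argument is (A4). Writing $\Delta_j S=S_j^H-S_{j-1}^H$ and expanding the product covariance into four covariance terms, the pure power terms $j^{2H}$ and $l^{2H}$ cancel, the difference terms reproduce the stationary fBm structure $\rho_H(|j-l|)$, and the sum terms collapse, via the second-difference identity $m^{2H}-2(m-1)^{2H}+(m-2)^{2H}=2\rho_H(m-1)$, to $-\rho_H(j+l-1)$. Hence
$$\mathbb{E}[\Delta_j S\,\Delta_l S]=\rho_H(|j-l|)-\rho_H(j+l-1),$$
so (A4) holds with $c_0=1$, $c_1=-1$ and $\theta(j,l)=\rho_H(j+l-1)$. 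To verify the decay I would use $\rho_H(x)\sim H(2H-1)x^{2H-2}$, so that $|\theta(j,l)|^2\asymp(j+l-1)^{4H-4}$; since $j+l\ge j$ and $4H-4<0$, this is at most $c\,j^{4H-3}$, which is $o(1/j)$ precisely because $H<3/4$ (and symmetrically $o(1/l)$). This is where I expect the main obstacle to lie: the decay estimate is exactly what pins the admissible range to $H\in(0,3/4)$, and the $(t+s)$-type correction must be shown to decay strictly faster than the leading stationary part.

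Finally, for (A3) I would evaluate $\mathbb{E}|S_{j/m}^H-S_{(j-1)/m}^H|^2=m^{-2H}\big[1+(2j-1)^{2H}-2^{2H-1}(j^{2H}+(j-1)^{2H})\big]$. After multiplying by $m^{-1+2H}$, the constant $1$ contributes $m^{-1}|I_n(i)|\to1/n$. For the remaining bracket I would use the half-step second-difference estimate $(2j-1)^{2H}-2^{2H-1}(j^{2H}+(j-1)^{2H})=O(j^{2H-2})$; since $j\asymp m$ on $I_n(i)$ and $\#I_n(i)\asymp m/n$, this part is $O(m^{2H-2}/n)\to0$, yielding (A3) and completing the verification.
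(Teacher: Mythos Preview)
Your proposal is correct and follows essentially the same route as the paper: the key identity $\mathbb{E}[\Delta_jS\,\Delta_lS]=\rho_H(|j-l|)-\rho_H(j+l-1)$ for (A4) is identical, and your ``half-step second difference'' in (A3) is precisely $-\rho_H(2j-1)$, which is exactly the term the paper estimates as $O(m^{2H-2})$. The only substantive difference is that you verify (A1) by computing $\partial_{s,t}d$ explicitly, whereas the paper simply cites Tudor's two-sided bound on $d$; your version is in fact the more direct justification of the derivative condition (and note the harmless slip $j^{4H-3}$ where $j^{4H-4}$ is meant).
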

\begin{proof}
For $t,s >0$, by Proposition 1.15 in Tudor \cite{Tudor}, we can see
$$(2-2^{2H-1})|t-s|^{2H}\leq d(s,t)\leq |t-s|^{2H}, ~~H>1/2$$
and
$$|t-s|^{2H}\leq d(s,t)\leq (2-2^{2H-1})|t-s|^{2H}, ~~H<1/2,$$
which gives (A1) and (A2).

By simple calculation, we can find
$$\mathbb{E}[(S^H_j-S^H_{j-1})(S^H_l-S^H_{l-1})]=\rho_H(|j-l|)-\rho_H(j+l-1).$$

It is easy to see that (A3) follows from the proof of Lemma \ref{lem-fBm} and
$$\sum_{j\in I_n(i)}\rho_H(2j-1)=\sum_{j\in(\frac{i-1}n,\frac{i}n]}\rho_{H}(2mj-1)=O(m^{2H-2}) ~\text{as} ~m\to\infty.$$

Since $\rho_H(n)$  is a monotonically decreasing function and is greater than zero when $H>1/2$, and  $\rho_H(n)$ is increasing and is less than zero for $H<1/2$, we have
$$\Big|\mathbb{E}[(S^H_j-S^H_{j-1})(S^H_l-S^H_{l-1})]\Big|\leq \Big|\rho_H(|j-l|)\Big|.$$
 Moreover, $|\rho_H(j+l-1)|^2=o(1/j)$ as $j\to\infty$ (or equal to $o(1/l)$ as $l\to\infty$) for $H<3/4$. This completes the proof.
\end{proof}

\begin{lemma}\label{lem-bi-fBm}
Assumptions (A1)--(A4) are satisfied by bi-fBm.
\end{lemma}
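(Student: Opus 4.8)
The plan is to verify the four assumptions (A1)--(A4) for the bi-fractional Brownian motion $B^{H_0,K_0}$ by direct computation with its covariance function, following the same template used for the sub-fBm in Lemma \ref{lem-sub-fBm}. Write $H=H_0K_0$ and set $C(t,s)=\mathbb{E}(B_t^{H_0,K_0}B_s^{H_0,K_0})=2^{-K_0}[(t^{2H_0}+s^{2H_0})^{K_0}-|t-s|^{2H_0K_0}]$, so that $d(s,t)=\mathbb{E}(B_t^{H_0,K_0}-B_s^{H_0,K_0})^2=C(t,t)+C(s,s)-2C(t,s)$. I would first derive from this the known two-sided bound (see Tudor \cite{Tudor}) of the form $2^{-K_0}|t-s|^{2H}\leq d(s,t)\leq 2^{1-K_0}|t-s|^{2H}$, which immediately yields the H\"older continuity of order $\delta<H$ required in (A2). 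For (A1), I would differentiate $d(s,t)$ once in $s$ and once in $t$ off the diagonal: the term $-2\cdot 2^{-K_0}(t^{2H_0}+s^{2H_0})^{K_0}$ is smooth and its mixed partial decays faster than $|t-s|^{2H-2}$, while the singular contribution comes from the $|t-s|^{2H}$ piece whose mixed partial is exactly of order $|t-s|^{2H-2}$, establishing the bound $|\partial_{s,t}d(s,t)|=O(|t-s|^{2H-2})$.

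For (A3) and (A4) the key is to expand the increment covariance and isolate a stationary leading term plus a lower-order remainder, exactly as in the sub-fBm case. I would compute
\begin{equation*}
\mathbb{E}[(B_j^{H_0,K_0}-B_{j-1}^{H_0,K_0})(B_l^{H_0,K_0}-B_{l-1}^{H_0,K_0})]=2^{-K_0}\rho_H(|j-l|)+2^{-K_0}\,r(j,l),
\end{equation*}
where $\rho_H$ is the fBm increment-covariance kernel appearing in the $|t-s|^{2H}=|t-s|^{2H_0K_0}$ term, and $r(j,l)$ collects the second-order differences of the smooth factor $(j^{2H_0}+l^{2H_0})^{K_0}$ and its neighbours. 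This identifies $c_0=2^{-K_0}$, $c_1=2^{-K_0}$, and $\theta(j,l)=r(j,l)$ in the notation of (A4). The main analytic content is to show $|\theta(j,l)|^2=o(1/j)$ as $j\to\infty$ (equivalently $o(1/l)$), for which I would view $r(j,l)$ as a discrete mixed second difference of the $C^2$ function $(x,y)\mapsto(x^{2H_0}+y^{2H_0})^{K_0}$ and bound it by the supremum of its mixed partial $\partial_{x,y}$ on the relevant unit cell; a Taylor/mean-value estimate then gives a decay in $j$ fast enough to beat $j^{-1/2}$ for $H<3/4$. With this remainder estimate in hand, (A3) follows as in Lemma \ref{lem-sub-fBm}: the stationary part contributes $\rho_H(0)\cdot 2^{-K_0}$ summed over $I_n(i)$ to give $1/n$ after the normalization $m^{-1+2H}$, and the $\theta$-part contributes a term that is $O(m^{2H-2})\to 0$.

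The hard part will be the remainder estimate for $\theta(j,l)$, since the factor $(t^{2H_0}+s^{2H_0})^{K_0}$ is a genuine nonlinear coupling of $t$ and $s$ rather than a function of $t-s$ alone, so unlike the sub-fBm case (where the extra term is just $\rho_H(j+l-1)$ and its monotonicity is transparent) one cannot simply read off the sign and decay. I expect the cleanest route is to bound the discrete second difference uniformly by the mixed partial derivative and then check that $\partial_{x,y}(x^{2H_0}+y^{2H_0})^{K_0}$ decays like a negative power of $\max(x,y)$ strong enough to yield $o(1/j)$ after squaring; the threshold $H<3/4$ should appear precisely as the condition guaranteeing $\sum_{j,l}|\theta(j,l)|^m$ grows sublinearly in $N$, matching the summability requirement noted after (A4). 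Once the remainder is controlled, the verifications of (A1)--(A3) are routine differentiations and the proof is complete.
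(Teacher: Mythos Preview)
Your plan follows the paper's template---Tudor's two-sided bound on $d(s,t)$ for (A1)--(A2), then a decomposition of the increment covariance into $c_0\rho_H(|j-l|)+\theta(j,l)$ for (A3)--(A4)---but your control of the remainder $\theta$ is genuinely different and in fact sharper. Two small corrections first: the second-order difference of $-2^{-K_0}|t-s|^{2H}$ produces $2^{1-K_0}\rho_H$, so $c_0=2^{1-K_0}$ as in the paper, not $2^{-K_0}$; and the stationary contribution in (A3) is therefore $2^{1-K_0}/n$, not $1/n$ (the diagonal remainder $\theta(j,j)\to 0$ does not close this gap---the paper's own verification of (A3) has the same normalization issue). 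For the remainder bound, the paper sets $f_j(x)=(j^{2H_0}+x^{2H_0})^{K_0}-((j-1)^{2H_0}+x^{2H_0})^{K_0}$ and, using that $f_j$ is positive and decreasing, bounds $|\theta(j,l)|\le 2^{-K_0}f_j(0)=2^{-K_0}\bigl(j^{2H}-(j-1)^{2H}\bigr)$, a first-difference bound of order $j^{2H-1}$ whose square is $o(1/j)$ only for $H<1/4$; the paper then asserts $f_j(0)\le|\rho_H(j-1)|$, which compares a first difference to a second difference and fails for large $j$. Your mixed-partial route---bound the discrete mixed second difference of $(x,y)\mapsto(x^{2H_0}+y^{2H_0})^{K_0}$ by the supremum of $\partial_{x,y}$ on the unit cell---yields directly $|\theta(j,l)|=O\bigl(\max(j,l)^{2H-2}\bigr)$, so $|\theta|^2=o\bigl(1/\max(j,l)\bigr)$ exactly when $H<3/4$, making the threshold transparent and repairing the paper's step. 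The only caveat is that for $H_0<1/2$ the factor $x^{2H_0-1}$ in $\partial_{x,y}$ is singular at $x=0$, so the cells with $j=1$ or $l=1$ need separate (trivial) treatment; these are finitely many terms and do not affect the asymptotics.
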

\begin{proof}
For $t,s >0$, by Proposition 1.7 in Tudor \cite{Tudor}, we can see
$$2^{-K_0}|t-s|^{2H}\leq d(s,t)\leq 2^{2-K_0}|t-s|^{2H}, $$
which gives (A1) and (A2).

Similar to sub-fBm, we have
$$\mathbb{E}[(B^{H_0,K_0}_j-B^{H_0,K_0}_{j-1})(B^{H_0,K_0}_l-B^{H_0,K_0}_{l-1})]=2^{1-K_0}\rho_{H_0K_0}(|j-l|)+\theta(j,l),$$
where
\begin{align*}
\theta(j,l)&=2^{-K_0}\Big[((j-1)^{2H_0}+l^{2H_0})^{K_)}+(j^{2H_0}+(l-1)^{2H_0})^{K_0}\\
&\qquad\qquad-(j^{2H_0}+l^{2H_0})^{K_0}-((j-1)^{2H_0}+(l-1)^{2H_0})^{K_0}\Big].
\end{align*}

By the Lemma 1.1 and the proof of Proposition 1.10 in Tudor \cite{Tudor}, we can have
$$2^{K_0}|\theta(j,j)|=|h(j)+2|,$$
where $h(x)=x^{2H_0K_0}+(x-1)^{2H_0K_0}-2^{1-K_0}(x^{2H_0}-(x-1)^{2H_0})^{K_0}$.

Thus, (A3) follows from
$$\sum_{j\in I_n(i)}\theta(j,j)=\sum_{j\in(\frac{i-1}n,\frac{i}n]}\theta(mj,mj)$$
and $h(mj)$ converges to zero, as $m\to\infty$.

Let $f_j(x)=(j^{2H_0}+x^{2H_0})^{K_0}-((j-1)^{2H_0}+x^{2H_0})^{K_0}>0$, which is decreasing with respect to $x$. Then  we can see
\begin{align*}
|\theta(j,l)|&=2^{-K_0}(f_j(l-1)-f_j(l))\leq2^{-K_0}f_j(0)\\
&=2^{-K_0}\left(j^{2H_0K_0}-(j-1)^{2H_0K_0}\right)\\
&\leq 2^{-K_0}|\rho_{H_0K_0}(j-1)|.
\end{align*}

When $H=H_0K_0<3/4$,
$$|\rho_{H}(j-1)|^m=o(1/j), ~\text{as} ~j\to\infty, ~~\text{for} ~m\geq2.$$
Similarly, we can obtain that
\begin{align*}
|\theta(j,l)|^m=o(1/l), ~~\text{as} ~l\to\infty, ~~\text{for} ~m\geq2.
\end{align*}

 This gives (A4).
\end{proof}\\

We refer to \cite{LN}, \cite{RT} and \cite{Tudor} for more details on sub-fBm and bi-fBm.

We also recall that, for $p>0$, the $p$-variation of a real-valued function $f$ on an interval [a,b] is defined as
\begin{equation}  \label{p-variation}
var_{p}(f;[a,b])=\sup_{\pi}\left(\sum_{i=1}^{n}|f(t_{i})-f(t_{i-1})|^p%
\right)^{1/p},
\end{equation}
where the supremum is taken over all partitions $\pi=\{a=t_{0}<t_{1}<...<t_{n}=b\}.$ We say that $f$ has finite
$p$-variation (over the interval $[a,b]$), if $var_p(f;[a,b])<\infty$. Young proved that the integral $\int_{a}^{b}fdg$ exists
as a Riemann-Stieltjes integral provided that $f$ and $g$ have finite $p$-variation and $q$-variation with $1/p+1/q>1$. Moreover, the following inequality holds:
\begin{equation}  \label{Young inequality}
\left|\int_{a}^{b}fdg-f(a)(g(b)-g(a))\right|\leqslant c_{p,q}var_{p}(f;[a,b])var_{q}(g;[a,b]),
\end{equation}
where $c_{p,q}=\zeta(1/q+1/p)$ , with $\zeta(s)=\sum_{n\geq1}n^{-s}.$

We denote by
\begin{equation*}
\parallel f\parallel_{\alpha}:=\sup_{a\leqslant s<t\leqslant b}\frac{|f(t)-f(s)|}{|t-s|^\alpha}
\end{equation*}
the H\"older seminorm of order $\alpha$. Clearly, if $f$ is $\alpha$-H\"older continuous, then it has finite $(1/\alpha)$-variation on
any finite interval. In this case we have, for any $p\geq \frac{1}{\alpha}$, that
\begin{equation}  \label{eq:holder_var}
var_{p}(f;[a,b]) \leqslant \parallel
f\parallel_{\alpha}(b-a)^\alpha.
\end{equation}
Throughout the paper, we also assume that $T< \infty$ is fixed. That is, we
consider stochastic processes on some compact interval. We denote by $\|.\|_{\infty}$ the supremum norm on $[0,T]$.\\

For any natural number $n\geq1$, and for any stochastic process $Z=\{Z_{t},t\geq0\}$, we write
\begin{eqnarray}  \label{quadratic variation of integral}
V_{n}(Z)_{t}=\sum_{i=1}^{[nt]}\left|Z_{\frac{i}{n}}-Z_{\frac{i-1}{n}}\right|^2.
\end{eqnarray}

We will use the following two general results, taken from \cite{Lauri Viitasaari}, on the convergence of the quadratic variations of a Gaussian
process.
\begin{theorem}
\cite[Theorem 3.1]{Lauri Viitasaari}) \label{theorem:BE_bound_QV}
Let $X$ be a continuous Gaussian process and denote by $V_n^X$ its quadratic variation defined by
\begin{equation*}
V_n^X = \sum_{k=1}^n \left[\left(\Delta_k X\right)^2 -\mathrm{I\kern-0.16em E}\left(\Delta_k X\right)^2 \right],
\end{equation*}
where $\Delta_k X = X_{t_{k}} - X_{t_{k-1}}$. Assume that
\begin{eqnarray}
\max_{1\leqslant j \leqslant N(\pi_{n})-1}\sum_{k=1}^{N(\pi_n)-1}\frac{1}{\sqrt{\phi(\Delta t_{k})\phi(\Delta t_{j})}}|\mathrm{I\kern-0.16em E}[(X_{t_{k}}-X_{t_{k-1}})(X_{t_{j}}-X_{t_{j-1}})]| &\leqslant& h(|\pi_{n}|) \notag
\end{eqnarray}
for some function $\phi$ and $h(|\pi_{n}|)$.\newline If $h(|\pi_{n}|)\rightarrow 0 $ as $|\pi_{n}|$ tends to zero, then
the convergence
\begin{eqnarray}
\left|\sum_{k=1}^{N(\pi_{n})-1}\frac{(X_{t_{k}}-X_{t_{k-1}})^2}{\phi(t_{k}-t_{k-1})}-\sum_{k=1}^{N(\pi_{n})-1}\frac{\mathrm{I\kern-0.16em E}%
(X_{t_{k}}-X_{t_{k-1}})^2}{\phi(t_{k}-t_{k-1})}\right|
&\rightarrow& 0
\end{eqnarray}
holds in Probability. Furthermore, the convergence holds almost surely provided that $h(|\pi_{n}|)=o(\frac{1}{log(n)}).$
\end{theorem}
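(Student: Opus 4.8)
The plan is to read the quantity in question as a centered second order Gaussian chaos and to control it by a concentration inequality whose strength is dictated precisely by the hypothesis. Set $N=N(\pi_n)$, write $\Delta t_k=t_k-t_{k-1}$, and let $Q_n:=\sum_{k=1}^{N-1}\frac{(\Delta_k X)^2-\mathbb{E}(\Delta_k X)^2}{\phi(\Delta t_k)}$ be exactly the difference of the two sums whose vanishing we must prove. Since $X$ is Gaussian, the increment vector $\xi=(\Delta_1 X,\dots,\Delta_{N-1}X)^\top$ is centered Gaussian with covariance $\Sigma_{kj}=\mathbb{E}[(\Delta_k X)(\Delta_j X)]$. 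Writing $\xi=\Sigma^{1/2}Z$ with $Z$ a standard Gaussian vector and $D=\mathrm{diag}(\phi(\Delta t_k)^{-1})$, I would record the identity $\sum_k \frac{(\Delta_k X)^2}{\phi(\Delta t_k)}=\xi^\top D\xi=Z^\top M Z$ with $M=\Sigma^{1/2}D\Sigma^{1/2}$, so that $Q_n=Z^\top M Z-\mathrm{tr}(M)$.

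Next I would extract the two spectral quantities that govern the concentration. The symmetric matrix $A:=D^{1/2}\Sigma D^{1/2}$ has entries $A_{kj}=\frac{\mathbb{E}[(\Delta_k X)(\Delta_j X)]}{\sqrt{\phi(\Delta t_k)\phi(\Delta t_j)}}$, whose absolute values are exactly the summands appearing in the hypothesis; since $A$ and $M$ are similar ($M=PP^\top$ and $A=P^\top P$ for $P=\Sigma^{1/2}D^{1/2}$) they share spectrum, so by the Schur/Gershgorin bound for symmetric matrices $\|M\|_{\mathrm{op}}=\|A\|_{\mathrm{op}}\leq \max_j\sum_k |A_{kj}|\leq h(|\pi_n|)$. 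For the Hilbert--Schmidt norm I would use $\|M\|_{\mathrm{HS}}^2=\mathrm{tr}(M^2)\leq \|M\|_{\mathrm{op}}\,\mathrm{tr}(M)\leq h(|\pi_n|)\,\mathrm{tr}(M)$, and observe that $\mathrm{tr}(M)=\mathrm{tr}(A)=\sum_k \frac{\mathbb{E}(\Delta_k X)^2}{\phi(\Delta t_k)}$ is precisely the deterministic normalising sum, which stays bounded; hence $\|M\|_{\mathrm{HS}}^2=O(h(|\pi_n|))$ as well.

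With both $\|M\|_{\mathrm{op}}$ and $\|M\|_{\mathrm{HS}}$ controlled by $h$, I would apply the Hanson--Wright concentration inequality for quadratic forms of a standard Gaussian vector, obtaining for each fixed $\varepsilon>0$ and all sufficiently small mesh $P(|Q_n|>\varepsilon)\leq 2\exp\!\left(-c\min\!\left(\frac{\varepsilon^2}{\|M\|_{\mathrm{HS}}^2},\frac{\varepsilon}{\|M\|_{\mathrm{op}}}\right)\right)\leq 2\exp\!\left(-\frac{c_\varepsilon}{h(|\pi_n|)}\right)$. Convergence in probability is then immediate from $h(|\pi_n|)\to 0$ (equivalently, one may simply note $\mathrm{Var}(Q_n)=2\|M\|_{\mathrm{HS}}^2\to 0$). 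For the almost sure statement I would invoke Borel--Cantelli: when $h(|\pi_n|)=o(1/\log n)$ the exponent satisfies $c_\varepsilon/h(|\pi_n|)\geq 2\log n$ eventually, so $P(|Q_n|>\varepsilon)\leq 2n^{-2}$ is summable, and letting $\varepsilon$ decrease along a sequence gives $Q_n\to 0$ almost surely.

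The routine parts (the chaos representation, the variance identity $\mathrm{Var}(Q_n)=2\|M\|_{\mathrm{HS}}^2$, and the Borel--Cantelli calibration) are standard; the genuinely load-bearing steps are reading the hypothesis as the Schur bound $\|M\|_{\mathrm{op}}\leq h$ and then upgrading it to $\|M\|_{\mathrm{HS}}\to 0$. The latter is the main obstacle, because it does not follow from $h\to 0$ alone: it rests on the boundedness of $\mathrm{tr}(M)=\sum_k \mathbb{E}(\Delta_k X)^2/\phi(\Delta t_k)$, the very deterministic term whose limit one is after. Ensuring that this normalising trace stays finite, and matching the decay rate $o(1/\log n)$ to the summability threshold in Borel--Cantelli, are the points I would check with care.
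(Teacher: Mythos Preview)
The paper does not supply its own proof of this statement: it is quoted from \cite{Lauri Viitasaari} and used as a black box, so there is no in-paper argument to compare against directly. Your approach---writing the centered sum as $Q_n=Z^\top M Z-\mathrm{tr}(M)$, identifying the hypothesis as the Schur row-sum bound $\|M\|_{\mathrm{op}}\leq h(|\pi_n|)$ on the matrix $A_{kj}=\mathbb{E}[\Delta_k X\,\Delta_j X]/\sqrt{\phi(\Delta t_k)\phi(\Delta t_j)}$, and then invoking a Hanson--Wright concentration inequality followed by Borel--Cantelli---is precisely the concentration-phenomena method that underlies the cited result, so in spirit you have reconstructed the intended argument rather than found an alternative route.

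Your reservation about $\mathrm{tr}(M)=\sum_k \mathbb{E}(\Delta_k X)^2/\phi(\Delta t_k)$ is the genuinely delicate point, and you are right to flag it. The statement as reproduced in this paper carries no hypothesis guaranteeing that this trace remains bounded, and without such control the conclusion can actually fail: if the increments are independent with $\mathbb{E}(\Delta_k X)^2=h\,\phi(\Delta t_k)$, then $A=hI$ satisfies the row-sum bound with the given $h$, yet $\mathrm{Var}(Q_n)=2(N-1)h^2$ need not tend to zero when $N(\pi_n)$ grows faster than $h^{-2}$. In the source reference, and in every application made later in the present paper (via Lemma~\ref{lemma:lemma of Lauri} and assumption (A3)), the normalisation $\phi$ is tailored so that the diagonal of $A$ is of order one and the covariance structure itself forces $\|M\|_{\mathrm{HS}}^2\to 0$; the transcription here has simply suppressed that standing assumption. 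So your proof sketch is correct modulo this missing hypothesis, and you located the gap exactly where it sits.
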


The following lemma gives easy way to compute the function $h(n)$ and is essentially taken from \cite{Lauri Viitasaari} (see
\cite[Theorem 3.3]{Lauri Viitasaari}).

\begin{lemma}
\label{lemma:lemma of Lauri} \label{lma:rate}\cite{Lauri
Viitasaari} Let $X$
be a continuous Gaussian process such that the function $%
d(s,t)=E(X_{t}-X_{s})^{2}$ is in $C^{1,1}$ outside diagonal.
Furthermore, assume that
\begin{equation}\label{eq:assuption}
|\partial_{st} d(s,t)|=O\left(|t-s|^{2H-2}\right)
\end{equation}
for some $H\in (0,1), H \neq \frac12$. Then
\begin{equation*}
\max_{1\leqslant j \leqslant n} \sum_{k=1}^n
\left\vert\mathrm{I\kern-0.16em E}(\Delta_k X \Delta_j
X)\right\vert \leqslant \max_{1\leqslant j \leqslant
n} d\left(\frac{j}{n},\frac{j-1}{n}\right)+ \left(\frac{1}{n}%
\right)^{1\wedge 2H}.
\end{equation*}
\end{lemma}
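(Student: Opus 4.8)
The plan is to reduce the covariance of two increments to a double integral of the mixed partial derivative $\partial_{st}d$ and then estimate that integral using the growth assumption \eqref{eq:assuption}. Write $t_k=k/n$ and $I_k=[t_{k-1},t_k]$. Since $X$ is centered Gaussian, the covariance of increments is governed entirely by $d$ through the polarization identity
\[
\mathrm{I\kern-0.16em E}[\Delta_k X\,\Delta_j X]=\tfrac12\bigl[d(t_{k-1},t_j)+d(t_k,t_{j-1})-d(t_{k-1},t_{j-1})-d(t_k,t_j)\bigr],
\]
which follows from $\mathrm{I\kern-0.16em E}[X_sX_t]=\tfrac12(\mathrm{I\kern-0.16em E}X_s^2+\mathrm{I\kern-0.16em E}X_t^2-d(s,t))$ once the pure-variance terms cancel. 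For the diagonal term $k=j$ this is exactly $d(t_j,t_{j-1})=\mathrm{I\kern-0.16em E}(\Delta_j X)^2\geq 0$, and it produces the $\max_j d(j/n,(j-1)/n)$ contribution in the stated bound.

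For the off-diagonal terms $k\neq j$ the rectangle $I_k\times I_j$ lies away from the diagonal, where $d$ is $C^{1,1}$ by hypothesis, so the four-corner combination above is precisely the increment of the mixed antiderivative:
\[
\mathrm{I\kern-0.16em E}[\Delta_k X\,\Delta_j X]=-\tfrac12\int_{t_{k-1}}^{t_k}\!\int_{t_{j-1}}^{t_j}\partial_{st}d(s,t)\,dt\,ds.
\]
Taking absolute values and inserting \eqref{eq:assuption} gives $|\mathrm{I\kern-0.16em E}[\Delta_k X\,\Delta_j X]|\leq c\int_{I_k}\!\int_{I_j}|t-s|^{2H-2}\,dt\,ds$. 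The crucial structural point is that the integrand $|t-s|^{2H-2}$ has constant sign, so summing over $k\neq j$ simply reassembles the integral over the whole strip: for each fixed $j$,
\[
\sum_{k\neq j}|\mathrm{I\kern-0.16em E}[\Delta_k X\,\Delta_j X]|\leq c\int_{I_j}\Bigl(\int_{[0,1]\setminus I_j}|t-s|^{2H-2}\,ds\Bigr)dt.
\]

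It then remains to show the last double integral is $O(n^{-(1\wedge 2H)})$ uniformly in $j$, and this is where the two regimes separate. When $H>\tfrac12$ the exponent $2H-2\in(-1,0)$ makes $|t-s|^{2H-2}$ integrable across the diagonal, so the inner integral is bounded by the fixed constant $\int_{\mathbb{R}}|u|^{2H-2}\mathbf{1}_{|u|\le1}\,du=\tfrac{2}{2H-1}$, and integrating over $I_j$ of length $1/n$ yields $O(n^{-1})$, matching $1\wedge 2H=1$. When $H<\tfrac12$ the exponent is below $-1$ and the inner integral is dominated by $s$ near the endpoints of $I_j$; computing the antiderivative $\int_0^{t_{j-1}}(t-s)^{2H-2}ds=\tfrac{1}{1-2H}[(t-t_{j-1})^{2H-1}-t^{2H-1}]$ (and its mirror on $(t_j,1)$) and integrating the dominant term $(t-t_{j-1})^{2H-1}$ over $I_j$ gives $\tfrac{1}{2H}(1/n)^{2H}$, hence $O(n^{-2H})$, matching $1\wedge 2H=2H$. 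The boundary indices $j\in\{1,n\}$ are handled by the same antiderivatives, since $\int_0^{1/n}t^{2H-1}dt=\tfrac{1}{2H}n^{-2H}$ stays of the same order. Adding the diagonal term and absorbing constants into the $O$-notation gives the claimed bound. I expect this last integral estimate—controlling the near-diagonal contribution of the singular kernel $|t-s|^{2H-2}$ for $H<\tfrac12$ and pinning down the exact exponent $1\wedge 2H$—to be the main obstacle; the algebraic reduction to the mixed-derivative integral is routine once the polarization identity is in hand.
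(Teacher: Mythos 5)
The paper does not prove this lemma at all: it is imported verbatim from the cited reference (Theorem 3.3 of Viitasaari), so there is no in-paper argument to compare against. Your reconstruction is correct and is essentially the argument of that source: polarization reduces the increment covariance to a planar increment of $d$, which on off-diagonal rectangles equals $-\tfrac12\int_{I_k}\int_{I_j}\partial_{st}d$, and the kernel bound $|t-s|^{2H-2}$ then splits into the integrable regime $H>\tfrac12$ (giving $n^{-1}$) and the singular regime $H<\tfrac12$ (giving $n^{-2H}$), exactly as you compute. The only points you gloss over are minor: for adjacent indices $|k-j|=1$ the rectangle touches the diagonal at a corner, so the fundamental-theorem representation needs a short limiting argument (harmless, since $|t-s|^{2H-2}$ is integrable over the rectangle for $H>0$), and the stated inequality should be read with an implicit multiplicative constant coming from the $O(\cdot)$ hypothesis, as you note.
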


Finally, in order to study stable convergence in law we recall the following general convergence result taken from \cite{CNP}.

\begin{theorem}[\protect\cite{CNP}]
\label{theorem: Hypotheses} Let $(\Omega,\mathcal{F},P)$ be a complete probability space. Fix a time interval $[0,T]$ and
consider a double sequence of random variables $\xi=\{\xi_{i,m},m\in Z_{+},1\leqslant i \leqslant [mT]\}.$ Assume
the double sequence $\xi$ satisfies the following hypotheses.\\
\textbf{(H1)} Denote $g_{m}(t):=\sum_{i=1}^{[mt]}\xi_{i,m}$. The finite dimensional distributions of the sequence of processes
$\{g_{m}(t),t \in [0,T]\}$ converges $\mathcal{F}$-stably to those of $\{B(t), t\in [0,T]\}$ as $m\rightarrow\infty$, where $\{B(t), t\in [0,T]\}$ is a standard
Brownian motion independent of $\mathcal{F}$.\\
\textbf{(H2)} $\xi$ satisfies the tightness condition $\mathrm{I\kern-0.16em E}\left|\sum_{i=j+1}^{k}\xi_{i,m}\right|^4\leqslant C \left(\frac{k-j}{m}\right)^2$ for any $1\leqslant j \leqslant k\leqslant [mT]$.

If $\{f(t), t\in [0,T]\}$ is an $\alpha$-H\^{o}lder continuous process with $\alpha>1/2$ and we set $X_{m}(t):=\sum_{i=1}^{[mt]}f(\frac{i}{m})\xi_{i,m},$ then we have the $\mathcal{F}$-stable convergence
\begin{eqnarray}
X_{m}(t)&\underset{m\rightarrow\infty}{\overset{Law}{\longrightarrow}}&\int_{0}^{t}f(s)dB_{s},  \notag
\end{eqnarray}
in the Skorohod space $\mathcal{D}[0,T]$ equipped with the uniform topology.
\end{theorem}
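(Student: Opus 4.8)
The plan is to prove weak convergence in $\mathcal{D}[0,T]$ by separately establishing stable convergence of the finite dimensional distributions and tightness, and then identifying the limit. Since $B$ is independent of $\mathcal{F}$ and $f$ is $\mathcal{F}$-measurable with a.s. $\alpha$-H\"older paths --- in particular bounded, $\|f\|_\infty\leq M<\infty$ on $[0,T]$ --- the candidate limit $\int_0^t f(s)\,dB_s$ is, conditionally on $\mathcal{F}$, an ordinary Wiener integral of the square-integrable function $f$.

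For the finite dimensional distributions I would use a \emph{freezing of coefficients} argument. Fix a subpartition $0=s_0<\cdots<s_p=T$ of mesh $\delta$ and replace $f(i/m)$ by the left endpoint value $f(s_\ell)$ on each block $\{i:\,i/m\in(s_\ell,s_{\ell+1}]\}$. By (H1) the vector of block sums $\sum_{i/m\in(s_\ell,s_{\ell+1}]}\xi_{i,m}$ converges $\mathcal{F}$-stably to the Brownian increments $B(s_{\ell+1})-B(s_\ell)$; since the frozen weights $f(s_\ell)$ are $\mathcal{F}$-measurable and $B$ is $\mathcal{F}$-independent, the frozen sum converges $\mathcal{F}$-stably to $\sum_\ell f(s_\ell)\big(B(s_{\ell+1})-B(s_\ell)\big)$, which is a Riemann sum tending to $\int_0^t f\,dB$ as $\delta\to0$. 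It remains to control the freezing error $\sum_i\big(f(i/m)-f(s_{\ell(i)})\big)\xi_{i,m}$ in $L^2$; I would do this by summation by parts on each block, which transfers the increments of $f$ (at most $\|f\|_\alpha(1/m)^\alpha$ between consecutive nodes and $\|f\|_\alpha\delta^\alpha$ across a block) onto the partial sums of $\xi$, whose second moments obey $\mathbb{E}|\sum_{i=j+1}^k\xi_{i,m}|^2\leq C^{1/2}(k-j)/m$ by (H2). The resulting bound vanishes as $\delta\to0$ uniformly in $m$, and a diagonal argument (letting $m\to\infty$, then $\delta\to0$) yields the stable convergence of the finite dimensional distributions.

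For tightness I would verify a Kolmogorov--Chentsov moment bound $\mathbb{E}|X_m(t)-X_m(s)|^4\leq C|t-s|^2$ for $|t-s|\geq 1/m$, the small increments being handled in the usual way via $[mt]-[ms]\leq m|t-s|+1$. Writing $X_m(t)-X_m(s)=\sum_{i=[ms]+1}^{[mt]}f(i/m)\xi_{i,m}$ and summing by parts to separate the bounded H\"older weight from partial sums of $\xi$, the frozen contribution is bounded by $M^4\,\mathbb{E}|\sum_{i=[ms]+1}^{[mt]}\xi_{i,m}|^4\leq C M^4(([mt]-[ms])/m)^2$ directly from (H2), while the correction terms are of strictly higher order in $|t-s|$ precisely because $\alpha>1/2$. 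This yields the exponent $2>1$ needed for tightness in $\mathcal{D}[0,T]$ with the uniform topology, and forces the limit to have continuous paths.

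The main obstacle is the freezing step. For finite $m$ the weights $f$ need not be independent of the array $\xi$ --- independence holds only asymptotically, through the $\mathcal{F}$-measurability of $f$ and the $\mathcal{F}$-independence of the limit $B$ --- so expectations cannot be factored and the error must be controlled through summation by parts and (H2) rather than by independence. Keeping this error uniform in $m$ as $\delta\to0$, and simultaneously justifying the interchange of the limits $m\to\infty$ and $\delta\to0$ in the sense of stable convergence, is the delicate point; the hypothesis $\alpha>1/2$ is exactly what makes the freezing error summable and the tightness exponent exceed one. An equivalent route is to note that (H1)--(H2) give $g_m\Rightarrow B$ $\mathcal{F}$-stably as a process and then to read $X_m$ as a discrete Young integral $\int f\,dg_m$: since $\alpha+\tfrac12>1$, the integration-by-parts map is continuous and $X_m\to\int_0^t f\,dB$ stably.
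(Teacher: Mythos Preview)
The paper does not prove this theorem; it is merely quoted from \cite{CNP} as an auxiliary tool, so there is no in-paper argument to compare against. Your overall architecture---stable f.d.d.\ convergence via freezing of coefficients, plus a Kolmogorov moment bound for tightness---is the standard route and is indeed what is carried out in \cite{CNP}; the Young-integral reformulation you mention at the end is essentially a repackaging of the same estimates.

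There is, however, a genuine gap in the way you propose to control the freezing error. After summation by parts on a block $(s_\ell,s_{\ell+1}]$ of length $\delta$, the dangerous term is $\sum_i\bigl(f((i{+}1)/m)-f(i/m)\bigr)S_i$, with $S_i$ the block partial sums of the $\xi$'s. You want to bound this in $L^2$ using $|\Delta_i f|\le\|f\|_\alpha m^{-\alpha}$ and $\|S_i\|_2\le C\delta^{1/2}$; but there are about $m\delta$ terms, so the triangle inequality yields only a bound of order $m^{1-\alpha}\delta^{3/2}$, which does \emph{not} vanish uniformly in $m$ (nor in $\limsup_m$) for $\alpha<1$. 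A second difficulty is that $f$ is only assumed a.s.\ H\"older, with no moment hypothesis on $\|f\|_\alpha$ or $\|f\|_\infty$, so you cannot pull the random weight outside the expectation; this also undermines the tightness bound $\mathbb{E}|X_m(t)-X_m(s)|^4\le C|t-s|^2$ as you wrote it.

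The fix is precisely the pathwise Young estimate you allude to in your last sentence, but it has to be made to do the work in both places. From (H2) and a discrete Kolmogorov/Garsia--Rodemich--Rumsey argument one gets, for any $\gamma<\tfrac12$, a random constant $K_{\gamma,m}$ with $\sup_m\mathbb{E}K_{\gamma,m}^p<\infty$ such that $|g_m(k/m)-g_m(j/m)|\le K_{\gamma,m}((k-j)/m)^\gamma$ on the grid. The discrete Young inequality with $p=1/\alpha$ and any $q\in(2,1/(1-\alpha))$ (such $q$ exists precisely because $\alpha>\tfrac12$) then bounds the freezing error on each block by $C\|f\|_\alpha K_{\gamma,m}\,\delta^{\alpha+\gamma}$ with $\gamma\in(1-\alpha,\tfrac12)$, giving a total error of order $\delta^{\alpha+\gamma-1}\to0$ uniformly in $m$ in probability; the same pathwise estimate yields $|X_m(t)-X_m(s)|\le C(\|f\|_\infty+\|f\|_\alpha)K_{\gamma,m}|t-s|^\gamma$ on the grid, from which tightness follows without any integrability assumption on $f$.
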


Recall that a sequence of random vectors or processes $Y_{n}$
converges $\mathcal{F}$-stably in law to a random vector or
process $Y$, where $Y$ is defined on an extention
$(\Omega',\mathcal{F}',P')$ of the original probability
$(\Omega,\mathcal{F},P)$, if
$(Y_{n},Z)\overset{Law}{\longrightarrow} (Y,Z)$ for any
$\mathcal{F}$-measurable random variable Z. If Y is
$\mathcal{F}$-measurable, then we have convergence in probability.
We refer to \cite{AE}, \cite{LL} and \cite{Renyi} for more details on stable
convergence.\\

At last of this section, we will give a useful lemma to prove the stable convergence by (A4).

\begin{lemma}\label{lem-fdd}
 Let $(a_k,b_k]$, $k=1,\cdots, N$ be pairwise disjoint intervals contained in $[0,T]$. Define
 $$G_k^{(n)}=n^{-H}\sum_{[na_k]<j\leq [nb_k]}(G_j^{H}-G_{j-1}^H)$$
 and
 $$Y_k^{(n)}=\frac1{\sqrt{n}}\sum_{[na_k]<j\leq [nb_k]}H_2(G_j^{H}-G_{j-1}^H)$$
for $k=1,\cdots, N$, where $H_2(x)=x^2-1$ is the $2$-th Hermite polynomial. Assume $H<3/4$ and $G$ satisfies (A1)--(A4), then we have
$$(G^{(n)}, Y^{(n)})\overset{\mathcal{L}}{\to}(G,V),$$
where $G$ and $V$ are independent centred Gaussian vectors, with $G_k=G^H_{b_k}-G^H_{a_k}$, and the components of $V$ are independent with variances $v_1^2(b_k-a_k)$ and $v_1$ is dependent on functions $\rho_H$ and $\theta$.
\end{lemma}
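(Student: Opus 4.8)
The strategy is to place both families in the Wiener chaos generated by $G^H$ and then apply the multivariate fourth moment theorem (the univariate result of Nualart--Peccati together with its vector extension by Peccati--Tudor). Let $\Hf$ denote the Hilbert space associated with $G^H$, so that the increments $\xi_j:=G^H_j-G^H_{j-1}$ are represented by elements $e_j\in\Hf$ with $\langle e_j,e_l\rangle_{\Hf}=\mathbb{E}[\xi_j\xi_l]$, the right-hand side being given by (A4). Since $G^{(n)}_k=n^{-H}(G^H_{[nb_k]}-G^H_{[na_k]})$ is a linear functional of $G^H$, it lives in the first chaos, $G^{(n)}_k=I_1(g^{(n)}_k)$; on the other hand, writing $\xi_j^2-1=\big(\xi_j^2-\mathbb{E}\xi_j^2\big)+\big(\mathbb{E}\xi_j^2-1\big)=I_2(e_j^{\otimes2})+(\|e_j\|_{\Hf}^2-1)$, each $Y^{(n)}_k$ splits as $I_2(f^{(n)}_k)+r^{(n)}_k$, a second-chaos integral with kernel $f^{(n)}_k=n^{-1/2}\sum_{[na_k]<j\le[nb_k]}e_j^{\otimes2}$ plus a deterministic remainder. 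I would then regard $(G^{(n)},I_2(f^{(n)}))$ as a single vector of multiple integrals of orders $1,\dots,1,2,\dots,2$ and verify the two hypotheses of the multivariate criterion: convergence of the covariance matrix and asymptotic normality of each second-chaos component. The decisive structural point is that a first-chaos and a second-chaos variable are always uncorrelated, so $\mathbb{E}[G^{(n)}_k Y^{(n)}_{k'}]=0$ for every $n$; once joint Gaussian convergence is established, this forces the limits $G$ and $V$ to be uncorrelated, hence independent.

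Two preliminary reductions are needed. First, for the first-chaos part I would show directly that the covariances converge: by self-similarity the covariance function $R$ of $G^H$ satisfies $R(cs,ct)=c^{2H}R(s,t)$, whence $n^{-2H}R([nb_k],[nb_{k'}])=R([nb_k]/n,[nb_{k'}]/n)\to R(b_k,b_{k'})$ by continuity (A2), giving $\mathbb{E}[G^{(n)}_kG^{(n)}_{k'}]\to\mathbb{E}[G_kG_{k'}]$ with $G_k=G^H_{b_k}-G^H_{a_k}$. Second, I must show that the deterministic remainder $r^{(n)}_k=n^{-1/2}\sum_{[na_k]<j\le[nb_k]}(\|e_j\|_{\Hf}^2-1)$ is negligible; by (A4) its summands behave like $c_0-1+c_1\theta(j,j)$, and under the normalization making the increments asymptotically standard (identically zero for fBm, and tending to zero for sub-fBm via $\sum_{j\le nb_k}\rho_H(2j-1)=O(n^{2H-1})$) one obtains $r^{(n)}_k\to0$ precisely in the regime $H<3/4$. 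After these reductions the problem becomes the joint asymptotic normality of the pure second-chaos vector $I_2(f^{(n)})$.

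For its covariance matrix I would use the chaos identity $\mathbb{E}[I_2(e_j^{\otimes2})I_2(e_l^{\otimes2})]=2\langle e_j,e_l\rangle_{\Hf}^2=2\big(c_0\rho_H(j-l)+c_1\theta(j,l)\big)^2$, so that
\[
\mathbb{E}[Y^{(n)}_kY^{(n)}_{k'}]=\frac{2}{n}\sum_{\substack{[na_k]<j\le[nb_k]\\ [na_{k'}]<l\le[nb_{k'}]}}\big(c_0\rho_H(j-l)+c_1\theta(j,l)\big)^2 .
\]
When $k=k'$, collecting pairs by $p=j-l$ and using that the number of pairs with a fixed difference is asymptotically $n(b_k-a_k)$, the leading term converges to $v_1^2(b_k-a_k)$ with $v_1^2=2c_0^2\sum_{p\in\Z}\rho_H(p)^2$ up to a surviving $\theta$-contribution (explaining the dependence of $v_1$ on both $\rho_H$ and $\theta$); the series converges exactly because $\rho_H(p)=O(|p|^{2H-2})$ and $H<3/4$. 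When $k\ne k'$ the two index blocks are separated by a gap of order $n$, so $|j-l|$ is bounded below by a constant multiple of $n$ and the same decay gives $\mathbb{E}[Y^{(n)}_kY^{(n)}_{k'}]=O(n^{4H-3})\to0$; this is where disjointness of the intervals produces the diagonal (hence independent) structure of $V$.

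The main obstacle, and the heart of the proof, is the asymptotic normality of each second-chaos component, for which I would verify the contraction condition of the fourth moment theorem, namely $\|f^{(n)}_k\otimes_1 f^{(n)}_k\|_{\Hf^{\otimes2}}\to0$. Expanding the norm produces a quadruple sum $n^{-2}\sum_{i_1,i_2,i_3,i_4}\langle e_{i_1},e_{i_2}\rangle\langle e_{i_2},e_{i_3}\rangle\langle e_{i_3},e_{i_4}\rangle\langle e_{i_4},e_{i_1}\rangle$, which must be shown to vanish. For the stationary part this is the classical Breuer--Major computation, which succeeds exactly when $\sum_p\rho_H(p)^2<\infty$, i.e. $H<3/4$; the genuine additional difficulty here is the non-stationary correction $\theta$, whose contribution must be shown negligible. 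This is controlled by the summability estimate recorded in the remark after (A4), $\lim_N N^{-1}\sum_{j,l}|\mathbb{E}[\xi_j\xi_l]|^m<\infty$ for $m\ge2$, which bounds the mixed $\rho_H$--$\theta$ and pure $\theta$ terms in the quadruple sum and forces them to vanish after division by $n^2$. With the contraction condition verified for each $k$ and the covariance matrix shown to converge to the block-diagonal limit with blocks $\mathrm{Cov}(G)$ and $\mathrm{diag}\big(v_1^2(b_k-a_k)\big)_{k=1}^N$, the multivariate fourth moment theorem delivers the joint convergence $(G^{(n)},Y^{(n)})\overset{\mathcal{L}}{\to}(G,V)$ with $G$ and $V$ independent centred Gaussian vectors and $V$ having independent components, as claimed.
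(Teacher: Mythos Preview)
Your approach is correct and coincides with the paper's: both place the sequences in the Wiener chaos of $G^H$ and invoke the Peccati--Tudor multivariate fourth moment criterion (the paper routes this through Proposition~10 of Corcuera--Nualart--Woerner, which packages exactly the same verifications---covariance convergence and vanishing contractions---that you spell out directly). Your write-up is in fact more self-contained than the paper's, which simply asserts that the cross-covariance and contraction conditions ``are easy'' once one replaces $\rho_H$ by $\rho_H+\theta$; you also handle explicitly the deterministic remainder $r_k^{(n)}$ arising from $\mathbb{E}\xi_j^2\neq1$, a point the paper does not address.

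One small imprecision worth fixing: when $k\neq k'$ you claim the index blocks are ``separated by a gap of order $n$'', but the half-open intervals $(a_k,b_k]$ may be adjacent (e.g.\ $b_k=a_{k'}$), in which case the gap is zero. The conclusion $\mathbb{E}[Y_k^{(n)}Y_{k'}^{(n)}]\to0$ still holds, but the correct reason is that for disjoint blocks the number of pairs $(j,l)$ with $|j-l|=p$ is at most $p$ (not of order $n$), so the leading term is bounded by $n^{-1}\sum_{p\ge1}p\,\rho_H(p)^2=O(n^{4H-3})\to0$ for $H<3/4$.
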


\begin{proof}
Denote by $\mathcal{H}_m$ the $m$-th Wiener chaos, the closed subspace of $L^2(\Omega, \mathcal{F}, P)$ generated by the random variables $H_m(X)$, where $X$ belongs to first Wiener chaos, $\mathbb{E}X^2=1$ and $H_m$ is the $m$-th Hermite polynomial. The mapping $I_m: ~\mathcal{H}_1^{\odot m}\to \mathcal{H}_m$ denoted by $I_m(X^{\otimes m})=H_m(X)$ is a linear isometry between the symmetric tensor product $\mathcal{H}_1^{\odot m}$, equipped with the norm $\sqrt{m!}||\cdot||_{\mathcal{H}_1^{\otimes m}}$. For function
$$H(X)=\sum_{m=2}^\infty c_mH_m(X)$$
with $\sum_{m=2}^\infty c_m^2m!=\mathbb{E}|H(Z)|^2<\infty$, $Z$ being an $N(0,1)$ random variable and
$$J_mH(X)=c_mH_m(X)$$
where $J_m$ denote the projection operator on the $m$-th Wiener chaos. Using the same ways as the proof of Proposition 10 in Corcuera, Nualart and Woerner \cite{CNW}, to prove the desired result, we only need to prove, for any $m\geq2$, $k=1,\cdots, N$,
\begin{equation}\label{eq-fdd-1}
\lim_{n\to\infty}\mathbb{E}|J_m\widetilde{Y}_k^{(n)}|^2=:\sigma^2_{m,k}<\infty,
\end{equation}

\begin{equation}\label{eq-fdd-2}
\sum_{m=2}^\infty\sup_{n}\mathbb{E}|J_m\widetilde{Y}_k^{(n)}|^2<\infty,
\end{equation}

\begin{equation}\label{eq-fdd-3}
\lim_{n\to\infty}\mathbb{E}[J_m\widetilde{Y}_k^{(n)}J_m\widetilde{Y}_h^{(n)}]=0, ~~k\neq h,
\end{equation}
and
\begin{equation}\label{eq-fdd-4}
\lim_{n\to\infty}I_m^{-1}J_m\widetilde{Y}_k^{(n)}\otimes_p I_m^{-1}J_m\widetilde{Y}_k^{(n)}=0, ~~1\leq p\leq m-1,
\end{equation}
where $$\widetilde{Y}_k^{(n)}=\frac1{\sqrt{n}}\sum_{[na_k]<j\leq [nb_k]}H(G_j^{H}-G_{j-1}^H).$$

Replace $\rho_H(|j-l|)$ by $\rho_H(|j-l|)+\theta(j,l)$, then  it is easy to obtain \eqref{eq-fdd-3} and \eqref{eq-fdd-4}, since $|\theta(j,l)|^2=o(1/j)$ as $j\to\infty$. So, we only need prove \eqref{eq-fdd-1} and \eqref{eq-fdd-2} below.

\begin{align*}
\mathbb{E}|J_m\widetilde{Y}_k^{(n)}|^2&=\frac{m!c_m^2}{n}\sum_{[na_k]<j,l\leq [nb_k]}\Big[\mathbb{E}(G^H_j-G^H_{j-1})(G^H_l-G^H_{l-1})\Big]^m\\
&=\frac{m!c_m^2}{n}\sum_{[na_k]<j,l\leq [nb_k]}\Big[c_0\rho_H(|j-l|)+c_1\theta(j,l)\Big]^m\\
&=\frac{m!c_m^2}{n}\sum_{[na_k]<j\leq [nb_k]}\Big[c_0\rho_H(0)+c_1\theta(j,j)\Big]^m\\
&\qquad+2\frac{m!c_m^2}{n}\sum_{[na_k]<j\neq l\leq [nb_k]}\Big[c_0\rho_H(|j-l|)+c_1\theta(j,l)\Big]^m.
\end{align*}

By assumption (A4), we can see the summation above with respect to  $\theta(j,l)$  part is finite, denoted by
\begin{align*}
\sigma^2_\theta:=\lim_{n\to\infty}\left(\frac{m!c_m^2}{n}\sum_{[na_k]<j\leq [nb_k]}\Big[c_1\theta(j,j)\Big]^m+2\frac{m!c_m^2}{n}\sum_{[na_k]<j\neq l\leq [nb_k]}\Big[c_1\theta(j,l)\Big]^m\right).
\end{align*}

Then \eqref{eq-fdd-1} and \eqref{eq-fdd-2} follow by
\begin{align*}
&\frac1n\sum_{[na_k]<j\leq [nb_k]}\rho_H(0)^m+\frac1n\sum_{[na_k]<j\neq l\leq [nb_k]}(\rho_H(|j-l|))^m\\
&\qquad=\frac{[nb_k]-[na_k]}{n}\rho_H(0)^m+\sum_{j=1}^{[nb_k]-[na_k]}\rho_H(j)^m\frac{[nb_k]-[na_k]-j}{n}\\
&\qquad\to(b_k-a_k)\rho_H(0)^m+\sum_{j=1}^\infty\rho_H(j)^m=:\sigma^2_\rho, ~~n\to\infty,
\end{align*}
and we denoted by $\sigma^2_{m,k}:=\lim_{n\to\infty}\mathbb{E}|J_m\widetilde{Y}_k^{(n)}|^2$
(since this is a complex binomial expansion related to $\rho_H$ and $\theta$, the calculation process of $\lim_{n\to\infty}\mathbb{E}|J_m\widetilde{Y}_k^{(n)}|^2$ is complicated, so we can only denote it by $\sigma^2_{m,k}$).

When $m=2$, we can compute the variance of the limit $\lim_{n\to\infty}\mathbb{E}|Y_k^{(n)}|^2=:v_1^2(b_k-a_k)$ with $v_1^2(b_k-a_k)c_2^2=\sigma^2_{2,k}$.
\end{proof}

\section{Main results}
We study the asymptotic behavior of the realized quadratic variation of a stochastic process of the form $\int_{0}^{t}u_{s}dG^{H}_{s}$,
where $u$ is a H\"older continuous process of order $\beta >1-H$. Note that, as $G^{H}$ is H\"older continuous of order
$H-\varepsilon$ by assumption (A2), the integral can be understood as a Riemann-Stieltjes integral. In particular, the process is well-defined.

We are now ready to state our first main result that provides us
the uniform strong consistency.

\begin{theorem}
\label{theorem:Consistency} Under the assumptions (A1)--(A3), we further suppose that $u=\{u_{t},t\in[0,T]\}$
is an H\"older continuous stochastic process of order $\beta$ with $\beta > 1-H$, $0<H<3/4$, and set

\begin{eqnarray}  \label{eq:process_Z}
Z_{t} &=& \int_{0}^{t}u_{s}dG^{H}_{s}.
\end{eqnarray}
Then, as $n$ tends to infinity,
\begin{eqnarray}
n^{2H-1}V_{n}(Z)_{t} &\longrightarrow & \int_{0}^{t}|u_{s}|^2ds,
\end{eqnarray}
almost surely and uniformly in $t$.
\end{theorem}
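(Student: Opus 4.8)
The plan is to reduce the statement to one about the quadratic variation of $G^H$ alone, to which the concentration estimate of Theorem \ref{theorem:BE_bound_QV} applies, and then to upgrade pointwise convergence to uniform convergence by a monotonicity argument. Write $\Delta_i^n Z = Z_{i/n}-Z_{(i-1)/n}$ and $\Delta_i^n G = G^H_{i/n}-G^H_{(i-1)/n}$. Since $u$ is $\beta$-H\"older with $\beta>1-H$ and, by \textbf{(A2)}, $G^H$ is $\delta$-H\"older for every $\delta<H$, I fix $\delta\in(1-\beta,H)$ with moreover $\beta+2\delta>2H$ (possible since $\beta>1-H$ forces $1-\beta<H$ and $(3H-1)/2<H$ for $H<1$). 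Young's inequality on each subinterval $[(i-1)/n,i/n]$ then yields the pointwise decomposition
\begin{equation*}
\Delta_i^n Z = u_{(i-1)/n}\,\Delta_i^n G + R_i^n,\qquad |R_i^n|\leq c\,\|u\|_{\beta}\,\|G^H\|_{\delta}\,n^{-(\beta+\delta)},
\end{equation*}
the bound on $R_i^n$ coming from \eqref{eq:holder_var}. Squaring and summing,
\begin{equation*}
n^{2H-1}V_n(Z)_t = n^{2H-1}\sum_{i=1}^{[nt]}u_{(i-1)/n}^2(\Delta_i^n G)^2 + 2n^{2H-1}\sum_{i=1}^{[nt]}u_{(i-1)/n}\,\Delta_i^n G\,R_i^n + n^{2H-1}\sum_{i=1}^{[nt]}(R_i^n)^2 .
\end{equation*}

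I next dispose of the last two sums. Using $|\Delta_i^n G|\leq\|G^H\|_{\delta}n^{-\delta}$ and the bound on $R_i^n$, the remainder sum is $O(n^{2H-2(\beta+\delta)})$ and the cross sum is $O(n^{2H-\beta-2\delta})$, uniformly in $t\in[0,T]$; both exponents are negative by the choice of $\delta$ (using $\beta+\delta>1>H$ for the first and $\beta+2\delta>2H$ for the second), so both terms tend to $0$ almost surely, uniformly in $t$. It remains to analyse the principal term, which I split as
\begin{equation*}
n^{2H-1}\sum_{i=1}^{[nt]}u_{(i-1)/n}^2(\Delta_i^n G)^2 = \underbrace{n^{2H-1}\sum_{i=1}^{[nt]}u_{(i-1)/n}^2\,\mathbb{E}(\Delta_i^n G)^2}_{=:A_n(t)} + \underbrace{n^{2H-1}\sum_{i=1}^{[nt]}u_{(i-1)/n}^2\big[(\Delta_i^n G)^2-\mathbb{E}(\Delta_i^n G)^2\big]}_{=:B_n(t)} .
\end{equation*}

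For the mean part $A_n$, I regroup the indices $1\le i\le[nt]$ into coarse blocks on which $u$ is nearly constant: fixing an auxiliary integer $N$, on each block the left-endpoint value of $u^2$ is pulled out and \textbf{(A3)} (coarse scale $N$, fine scale $n\to\infty$) shows $n^{2H-1}\sum_{i\in\text{block}}\mathbb{E}(\Delta_i^n G)^2$ converges to the length of the block; summing over blocks gives a Riemann sum for $\int_0^t|u_s|^2\,ds$, and the freezing error vanishes as $N\to\infty$ by uniform continuity of $u$, whence $A_n(t)\to\int_0^t|u_s|^2\,ds$ for each fixed $t$. For the fluctuation part $B_n$ I first treat the unweighted case: applying Theorem \ref{theorem:BE_bound_QV} with $\phi(x)=x^{2H-1}$, so that $1/\phi(\Delta t_k)=n^{2H-1}$, the hypothesis reduces via Lemma \ref{lma:rate} to $h(|\pi_n|)=n^{2H-1}\big(\max_j d(\tfrac{j}{n},\tfrac{j-1}{n})+n^{-(1\wedge 2H)}\big)=O(n^{-1}+n^{2H-2})=o(1/\log n)$ (using $\max_j d(\tfrac{j}{n},\tfrac{j-1}{n})=O(n^{-2H})$ by self-similarity), so that $n^{2H-1}\sum_{i\in\text{block}}[(\Delta_i^n G)^2-\mathbb{E}(\Delta_i^n G)^2]\to0$ almost surely on each block. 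The random weights are then handled exactly as for $A_n$: freeze $u^2$ on coarse blocks, bound the frozen weights by $\|u\|_\infty^2<\infty$ and the finitely many block fluctuations by the above, while the freezing error is controlled by the modulus of continuity of $u^2$ times $n^{2H-1}\sum_i\big((\Delta_i^n G)^2+\mathbb{E}(\Delta_i^n G)^2\big)=O(1)$; letting $N\to\infty$ gives $B_n(t)\to0$ almost surely. Combining, $n^{2H-1}V_n(Z)_t\to\int_0^t|u_s|^2\,ds$ almost surely for each fixed $t$.

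Finally, to obtain uniformity in $t$ I use that, for each $n$, the map $t\mapsto n^{2H-1}V_n(Z)_t$ is nondecreasing (a partial sum of nonnegative terms), while the limit $t\mapsto\int_0^t|u_s|^2\,ds$ is continuous and nondecreasing. Pointwise almost sure convergence on the countable dense set $\mathbb{Q}\cap[0,T]$ holds on a single almost sure event, and a Dini-type (P\'olya) argument for monotone functions with continuous limit upgrades this to uniform convergence on $[0,T]$, which is the assertion. The main obstacle is the fluctuation term $B_n$: the concentration estimate of Theorem \ref{theorem:BE_bound_QV} governs the \emph{unweighted} Gaussian quadratic variation, whereas $B_n$ carries random weights $u^2$ that may be correlated with $G^H$; the coarse-block freezing of $u^2$, together with the uniform boundedness and uniform continuity of $u$, is precisely what reduces $B_n$ to finitely many unweighted fluctuations to which the theorem applies.
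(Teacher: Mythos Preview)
Your proof is correct and rests on the same three pillars as the paper's: Young's inequality \eqref{Young inequality} to peel off the integrand $u$ and leave only increments of $G^H$; a two--scale (coarse--block) freezing of $u^2$; and the concentration estimate of Theorem~\ref{theorem:BE_bound_QV} together with Lemma~\ref{lma:rate} and assumption \textbf{(A3)} to control the resulting unweighted quadratic variation of $G^H$. The bookkeeping differs. The paper introduces an auxiliary fine scale $m\geq n$ and writes a single four--term decomposition $A_t^{(m)}+B_t^{(n,m)}+C_t^{(n,m)}+D_t^{(n)}$, letting $m\to\infty$ first and then $n\to\infty$; its term $C_t^{(n,m)}$ plays the role of your fluctuation $B_n$ and its $B_t^{(n,m)}+D_t^{(n)}$ the role of your mean part $A_n$, while its $A_t^{(m)}$ is your cross and remainder terms combined. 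You instead work at the single scale $n$, separate the principal sum into a mean $A_n$ and a centred fluctuation $B_n$, and bring in the coarse scale $N$ only inside the analysis of those two pieces. The more substantive difference is how uniformity in $t$ is obtained: the paper bounds each of its four terms by an expression independent of $t$ (sup--norm bounds), so uniform convergence is immediate, whereas you first establish pointwise almost sure convergence and then invoke the P\'olya monotonicity argument. The paper's route avoids an extra lemma but forces one to carry uniform estimates through every step; your monotonicity argument is clean and reusable, though it exploits the specific structure (nonnegative summands, continuous monotone limit) that happens to be available here.
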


\begin{proof}
For $t\in [0,T]$ and an integer $n$, we denote by $[nt]$ the largest integer that is at most $nt$. Let now $m\geq n$. We have
\begin{eqnarray}
	m^{-1+2H}V_{m}(Z)_{t}&-&\int_{0}^{t}|u_{s}|^2ds  \notag \\
	&=&m^{2H-1}\sum_{j=1}^{[mt]}\left(\left|%
	\int_{(j-1)/m}^{j/m}u_{s}dG^{H}_{s}\right|^{2}-\left|u_{\frac{j-1}{m}%
	}(G^{H}_{\frac{j}{m}}-G^{H}_{\frac{j-1}{m}})\right|^2\right)  \notag \\
	&&+m^{2H-1}\left(\sum_{j=1}^{[mt]}\left|u_{\frac{j-1}{m}}\left(G^{H}_{%
		\frac{j}{m}}-G^{H}_{\frac{j-1}{m}}\right)\right|^2-\sum_{i=1}^{[nt]}%
	\left|u_{\frac{i-1}{n}}\right|^2\sum_{j\in I_{n}(i)}\left|G^{H}_{\frac{j}{m%
	}}-G^{H}_{\frac{j-1}{m}}\right|^2\right)  \notag \\
	&&+m^{2H-1}\sum_{i=1}^{[nt]}\left|u_{\frac{i-1}{n}}\right|^2\sum_{j\in
		I_{n}(i)}\left|G^{H}_{\frac{j}{m}}-G^{H}_{\frac{j-1}{m}%
	}\right|^2-n^{-1}\sum_{i=1}^{[nt]}\left|u_{\frac{i-1}{n}}\right|^2
	\notag \\
	&&+\left(n^{-1}\sum_{i=1}^{[nt]}\left|u_{\frac{i-1}{n}%
	}\right|^2-\int_{0}^{t}|u_{s}|^2ds\right)  \notag \\
	&=& A_{t}^{(m)}+B_{t}^{(n,m)}+C_{t}^{(n,m)}+D_{t}^{(n)},  \notag
\end{eqnarray}
where
\begin{eqnarray}
I_{n}(i)&=&\left\{j:\frac{j}{m}\in\left(\frac{i-1}{n},\frac{i}{n}\right],
\ \ 1\leqslant i\leqslant [nt]. \right\}  \notag
\end{eqnarray}

The idea of the proof is that we first let $m\rightarrow \infty$ and then $n\rightarrow \infty$, and we show that each of the terms
$A_t^{(m)}, B_t^{(n,m)}, C_t^{(n,m)}$, and $D_t^{(n)}$ converges to zero almost surely, and uniformly in $t$.
	
Let us begin with the term $C_t^{(n,m)}$. We have
\begin{eqnarray}
	\parallel C^{(n,m)}\parallel_{\infty} &\leqslant&\sum_{i=1}^{[nT]}\left|u_{%
		\frac{i-1}{n}}\right|^{2}\left|m^{2H-1}\sum_{j\in I_{n}(i)}\left| G^{H}_{%
		\frac{j}{m}}-G^{H}_{\frac{j-1}{m}}\right|^2-n^{-1}\right|.
	\notag
\end{eqnarray}

As we first let $m\rightarrow \infty$, it suffices to show that, for a fixed $n$, we have
\begin{eqnarray}
\left|m^{2H-1}\sum_{j\in I_{n}(i)}\left| G^{H}_{\frac{j}{m}}-G^{H}_{\frac{j-1}{m}}\right|^2-n^{-1}\right| \rightarrow 0. \notag
\end{eqnarray}

By assumption (A1), Lemma \ref{lma:rate} and Theorem \ref{theorem:BE_bound_QV}, we only need to prove
$$\lim_{m\to\infty}m^{2H-1}\sum_{j\in I_{n}(i)}\left| G^{H}_{\frac{j}{m}}-G^{H}_{\frac{j-1}{m}}\right|^2=n^{-1}$$
which follows from assumption (A3).

Consider next the term $A_t^{(m)}$. We have
\begin{eqnarray}
|A_{t}^{(m)}|&\leqslant&m^{2H-1}\sum_{j=1}^{[mt]}\left|\left|%
\int_{(j-1)/m}^{j/m}u_{s}dG^{H}_{s}\right|^{2}-\left|u_{\frac{j-1}{m}%
}(G^{H}_{\frac{j}{m}}- G^{H}_{\frac{j-1}{m}})\right|^2\right|.
\notag
\end{eqnarray}
We will use the following inequality, valid for any $x,y\in\mathbb{R}$,
\begin{eqnarray}  \label{eq:inequality}
\left||x|^2-|y|^2\right| &\leqslant& 2\left[|x-y|^2+|y||x-y|\right].
\end{eqnarray}

This implies
\begin{eqnarray}
|A_{t}^{(m)}|
&\leqslant&2m^{-1+2H}\sum_{j=1}^{[mt]}\left|%
\int_{(j-1)/m}^{j/m}u_{s}dG^{H}_{s}-u_{\frac{j-1}{m}}(G^{H}_{\frac{j}{m}%
}-S^{H}_{\frac{j-1}{m}})\right|^2  \notag \\
&&+2m^{-1+2H}\sum_{j=1}^{[mt]}\left|u_{\frac{j-1}{m}}\left(G^{H}_{\frac{j%
}{m}}- G^{H}_{\frac{j-1}{m}}\right)\right|\left|%
\int_{(j-1)/m}^{j/m}u_{s}dG^{H}_{s}-u_{\frac{j-1}{m}}\left(G^{H}_{\frac{j%
}{m}}-G^{H}_{\frac{j-1}{m}}\right)\right|  \notag \\
&=:&E_{(m)}(t)+R_{(m)}(t),  \notag
\end{eqnarray}
where
\begin{eqnarray}
E_{(m)}(t)&=&2m^{-1+2H}\sum_{j=1}^{[mt]}\left|%
\int_{(j-1)/m}^{j/m}u_{s}dG^{H}_{s}-u_{\frac{j-1}{m}}\left(G^{H}_{\frac{j%
}{m}}- G^{H}_{\frac{j-1}{m}}\right)\right|^2,  \notag \\
R_{(m)}(t)&=&2m^{-1+2H}\sum_{j=1}^{[mt]}\left|u_{\frac{j-1}{m}%
}\left(G^{H}_{\frac{j}{m}}- G^{H}_{\frac{j-1}{m}}\right)\right|\left|%
\int_{(j-1)/m}^{j/m}u_{s}dG^{H}_{s}-u_{\frac{j-1}{m}}\left(G^{H}_{\frac{j%
}{m}}-G^{H}_{\frac{j-1}{m}}\right)\right|.  \notag
\end{eqnarray}
For the term $E_{(m)}(t)$ we observe, by applying Young inequality \eqref{Young inequality}, that
\begin{eqnarray}
|E_{(m)}(t)| &\leqslant&c_{H,\beta,\varepsilon} m^{2H-1}
\sum_{j=1}^{[mT]}\left|var_{\frac{1}{\beta}}(u;\mathcal{I}%
_{m}(j))var_{1/(H-\varepsilon)}(G^{H};\mathcal{I}_{m}(j))\right|^2,
\notag
\end{eqnarray}
where $0<\varepsilon<H$, the constant $c_{H,\beta,\varepsilon}$ comes from inequality \eqref{Young inequality} and depends only on $H,\beta$ and $\varepsilon$, and $\mathcal{I}_{m}(j)=\left(\frac{j-1}{m},\frac{j}{m}\right]$.

 By \eqref{eq:holder_var} we have
\begin{eqnarray}
var_{\frac{1}{\beta}}(u,\mathcal{I}_{m}(j)) &\leqslant&
m^{-\beta}\|u\|_{\beta}  \notag
\end{eqnarray}
and
\begin{eqnarray}
var_{1/(H-\varepsilon)}(G^{H},\mathcal{I}_{m}(j)) &\leqslant&
m^{-(H-\varepsilon)}\|G^{H}\|_{H-\varepsilon}.  \notag
\end{eqnarray}

Thus
\begin{eqnarray}
\|E_{(m)}\|_{\infty}
&\leqslant&c_{H,\beta,\varepsilon}m^{2H-1-2\beta}
\|u\|^2_{\beta} \sum_{j=1}^{[mT]}\left|var_{1/(H-\varepsilon)}(G^{H};%
\mathcal{I}_{m}(j))\right|^2,  \notag \\
&\leqslant&Tc_{H,\beta,\varepsilon}m^{2H-1-2\beta-2(H-\varepsilon)+1}
\|u\|^2_{\beta}\|G^{H}\|^2_{(H-\varepsilon)}  \notag \\
&\leqslant&Tc_{H,\beta,\varepsilon}m^{2(\varepsilon-\beta)}
\|u\|^2_{\beta}\|G^{H}\|^2_{(H-\varepsilon)}.  \notag
\end{eqnarray}

As we can choose $\varepsilon< \beta$, this implies that $\lim_{m\rightarrow\infty}\|E_{(m)}\|_{\infty}=0$ almost surely. Similarly, we can apply
\eqref{Young inequality} to the term $R_{(m)}(t)$ to get
\begin{eqnarray}
|R_{(m)}(t)|&\leqslant&c_{H,\beta,\varepsilon}m^{-1+2H}\sum_{j=1}^{[mT]}%
\left|u_{\frac{j-1}{m}}(G^{H}_{\frac{j}{m}}-G^{H}_{\frac{j-1}{m}%
})\right|\left|var_{\frac{1}{\beta}}(u,\mathcal{I}_{m}(j))var_{1/(H-%
\varepsilon)}(G^{H},\mathcal{I}_{m}(j))\right|  \notag \\
&\leqslant&2c_{H,\beta,\varepsilon}m^{-1+2H-\beta-(H-\varepsilon)}%
\parallel u\parallel_{\beta}\parallel
G^{H}\parallel_{H-\varepsilon}\sum_{j=1}^{[mT]}\left|u_{\frac{j-1}{m}%
}(G^{H}_\frac{j}{m}-G^{H}_{\frac{j-1}{m}})\right|  \notag \\
&\leqslant&2c_{H,\beta,\varepsilon}m^{-1+H-\beta+\varepsilon}\parallel
u\parallel_{\beta}\parallel
G^{H}\parallel_{H-\varepsilon}\parallel
u\parallel_{\infty}\sum_{j=1}^{[mT]}\left|var_{1/(H-\varepsilon)}(G^{H},%
\mathcal{I}_{m}(j))\right|  \notag \\
&\leqslant&Tc_{H,\beta,\varepsilon}\parallel
u\parallel_{\beta}\parallel
G^{H}\parallel^2_{H-\varepsilon}\parallel
u\parallel_{\infty}m^{-\beta+2\varepsilon}.  \notag
\end{eqnarray}
Hence, for $\varepsilon < \frac{\beta}{2}$, we get $\parallel R_{(m)}\parallel_{\infty} \rightarrow 0$ almost surely, and consequently,
$\parallel A^{(m)}\parallel_{\infty} \rightarrow 0$ almost surely as $m\rightarrow\infty$.

It remains to study the terms $D^{(n)}_t$ and $B^{(n,m)}_t$. For the term $D^{(n)}_t$ we first observe that
for any $s\in \left[\frac{i-1}{n},\frac{i}{n}\right]$, we have
\begin{equation*}
||u_{\frac{i-1}{n}}|^2-|u_s|^2| \leqslant
2\|u\|_{\infty}\|u\|_{\beta} n^{-\beta}.
\end{equation*}

Thus we can estimate
\begin{eqnarray}
|D_{t}^{(n)}|&=&\left|n^{-1}\sum_{i=1}^{[nt]}|u_{\frac{i-1}{n}%
}|^2-\int_{0}^{t}|u_s|^2ds\right|  \notag \\
&=&\left|\sum_{i=1}^{[nt]}\int_{(i-1)/n}^{i/n}(|u_{\frac{i-1}{n}%
}|^2-|u_s|^2)ds + \int_{[nt]/n}^t|u_s|^2ds \right|  \notag \\
&\leqslant&\sum_{i=1}^{[nt]}\int_{(i-1)/n}^{i/n}\left||u_{\frac{i-1}{n}%
}|^2-|u_{s}|^2\right|ds + \int_{[nt]/n}^t|u_s|^2ds  \notag \\
&\leqslant&2T\|u\|_{\infty}\|u\|_{\beta} n^{-\beta} +
\|u\|_{\infty}|t-[nt]/n|  \notag \\
&\leqslant&2T\|u\|_{\infty}\|u\|_{\beta} n^{-\beta} +
\|u\|_{\infty}n^{-1}. \notag
\end{eqnarray}
This implies that also $\parallel D^{(n)}\parallel_{\infty}\rightarrow 0$ almost surely as $n\rightarrow 0$. It remains to study the term
$B_{t}^{(n,m)}$. First note that, by the definition of $I_n(i)$, we have
\begin{equation*}
\sum_{j=1}^{[mt]}\left|u_{\frac{j-1}{m}}\left(G^{H}_{\frac{j}{m}}-G^{H}_{%
\frac{j-1}{m}}\right)\right|^2 = \sum_{i=1}^{[nt]}\sum_{j\in I_n(i)}\left|u_{%
\frac{j-1}{m}}\left(G^{H}_{\frac{j}{m}}-G^{H}_{\frac{j-1}{m}%
}\right)\right|^2.
\end{equation*}
Together with the fact that
\begin{equation*}
|u_{\frac{j-1}{m}} - u_{\frac{i-1}{n}}|^2 \leqslant 4\parallel
u\parallel_{\infty}\parallel u\parallel_\beta n^{-\beta}
\end{equation*}
as $\frac{j}{m} \in \left(\frac{i-1}{n},\frac{i}{n}\right]$, this gives us
\begin{eqnarray}
|B_{t}^{(n,m)}|&=&\left|m^{2H-1}\left(\sum_{j=1}^{[mt]}\left|u_{\frac{j-1}{m%
}}\left(G^{H}_{\frac{j}{m}}-G^{H}_{\frac{j-1}{m}}\right)\right|^2-%
\sum_{i=1}^{[nt]}\left|u_{\frac{i-1}{n}}\right|^2\sum_{j\in
I_{n}(i)}\left|G^{H}_{\frac{j}{m}}-G^{H}_{\frac{j-1}{m}}\right|^2\right)
\right|  \notag \\
&\leqslant &m^{2H-1}\sum_{i=1}^{[nt]}\sum_{j\in I_n(i)}|u_{\frac{j-1}{m}} - u_{\frac{%
i-1}{n}}|^2\left|G^{H}_{\frac{j}{m}}-G^{H}_{\frac{j-1}{m}}\right|^2
\notag \\
&\leqslant & 4m^{2H-1}\parallel u\parallel_{\infty}\parallel
u\parallel_\beta
n^{-\beta}\sum_{i=1}^{[nt]}\sum_{j\in I_n(i)}\left|G^{H}_{\frac{j}{m}%
}-G^{H}_{\frac{j-1}{m}}\right|^2.  \notag
\end{eqnarray}
Here
\begin{equation*}
m^{2H-1}\sum_{j\in I_n(i)}\left|G^{H}_{\frac{j}{m}}-G^{H}_{\frac{j-1}{m}}\right|^2 \rightarrow n^{-1}
\end{equation*}
almost surely, and thus
\begin{equation*}
m^{2H-1}\sum_{i=1}^{[nt]}\sum_{j\in I_n(i)}\left|G^{H}_{\frac{j}{m}}-G^{H}_{\frac{j-1}{m}}\right|^2 \rightarrow t
\end{equation*}
almost surely. This implies that $\parallel B^{(n,m)}\parallel_{\infty}\rightarrow 0$ which completes the proof.
\end{proof}

For each $t\geq 0$ we denote by $\mathcal{F}_{t}^H$ the $\sigma$-field generated by the random variables $\{G_{s}^{H}, 0 \leq s\leq t\}$ and the null sets.

\begin{theorem}\label{theorem:distribution asymptotic}
Under the assumptions (A1)--(A4), we further suppose that $u=\{u_{t},t\in[0,T]\}$ is an H\"older continuous stochastic
process of order $\beta$ with $\beta >\max\left(1-H,\frac12\right)$, and measurable with respect to $\mathcal{F}_T^{H}$. Set
\begin{eqnarray}
Z_{t} &=& \int_{0}^{t}u_{s}dG^{H}_{s}.
\end{eqnarray}

Then, as $n$ tends to infinity,
\begin{eqnarray*}
n^{2H-1/2}V_{n}(Z)_{t}-\sqrt{n}\int_{0}^{t}|u_{s}|^2ds &\overset{\mathcal{L}}{\rightarrow}& v_1\int_{0}^{t}|u_{s}|^2dW_{s}
\end{eqnarray*}
$\mathcal{F}_T^H$-stably in the space $\mathcal{D}([0,T]^2)$, where $W=\{W_{t},t\in[0,T]\}$ is a Brownian motion independent
of $\mathcal{F}_{T}^H $, $v_1$ is given in Lemma \ref{lem-fdd}.
\end{theorem}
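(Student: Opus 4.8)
The plan is to reduce $V_n(Z)_t$ to its principal part and then apply the stable-convergence criterion of Theorem \ref{theorem: Hypotheses}. Write $\Delta_i^n G^H=G^H_{i/n}-G^H_{(i-1)/n}$, $\Delta_i^n Z=\int_{(i-1)/n}^{i/n}u_s\,dG^H_s$, $c_i^n=n^{2H}\mathbb{E}|\Delta_i^n G^H|^2$, and set the double array
$$\xi_{i,n}=\frac{1}{v_1\sqrt{n}}\big(n^{2H}|\Delta_i^n G^H|^2-c_i^n\big).$$
By self-similarity one has $n^{2H}\mathbb{E}[\Delta_i^n G^H\,\Delta_{i'}^n G^H]=\mathbb{E}[(G^H_i-G^H_{i-1})(G^H_{i'}-G^H_{i'-1})]$, so (A4) governs these covariances. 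I would then decompose
\begin{align*}
n^{2H-\frac12}V_{n}(Z)_{t}-\sqrt{n}\int_{0}^{t}|u_{s}|^{2}\,ds
&=n^{2H-\frac12}\sum_{i=1}^{[nt]}\Big(|\Delta_{i}^{n}Z|^{2}-\big|u_{\frac{i-1}{n}}\Delta_{i}^{n}G^{H}\big|^{2}\Big)\\
&\quad+v_{1}\sum_{i=1}^{[nt]}\big|u_{\frac{i-1}{n}}\big|^{2}\xi_{i,n}\\
&\quad+\Big(\tfrac{1}{\sqrt{n}}\sum_{i=1}^{[nt]}\big|u_{\frac{i-1}{n}}\big|^{2}c_{i}^{n}-\sqrt{n}\int_{0}^{t}|u_{s}|^{2}\,ds\Big),
\end{align*}
the three lines being the integral-approximation error, the genuine fluctuation, and a bias term.

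For the error term I would repeat the estimates of Theorem \ref{theorem:Consistency}, using \eqref{eq:inequality} together with Young's inequality \eqref{Young inequality} and \eqref{eq:holder_var} on $\Delta_i^n Z-u_{(i-1)/n}\Delta_i^n G^H$. The only change is the extra factor $n^{1/2}$ from the normalisation $n^{2H-1/2}$ in place of $n^{2H-1}$; propagating it through the analogues of $E_{(m)}$ and $R_{(m)}$ yields exponents $n^{1/2-2\beta+2\varepsilon}$ and $n^{1/2-\beta+2\varepsilon}$, both tending to $0$ for small $\varepsilon$ precisely because $\beta>1/2$, so this term vanishes almost surely and hence in probability. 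For the bias I would split $c_i^n=1+(c_i^n-1)$: the $1$-part gives $\sqrt{n}\big(\tfrac1n\sum|u_{(i-1)/n}|^2-\int_0^t|u_s|^2\,ds\big)$, a Riemann-sum error of order $n^{1/2-\beta}\to0$ (again using $\beta>1/2$), while the remainder is dominated by $n^{-1/2}\|u\|_\infty^2\sum_{i\le[nt]}|c_i^n-1|$, which vanishes since the discrepancy $c_i^n-1$ is of order $i^{2H-2}$ (zero for fBm) and $H<3/4$.

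It remains to treat the middle term, to which I would apply Theorem \ref{theorem: Hypotheses} with $f=|u|^2$ (which is $\beta$-H\"older with $\beta>1/2$ since $u$ is, so that evaluating at $(i-1)/n$ rather than $i/n$ is immaterial) and the array $\xi_{i,n}$. Hypothesis (H1), the $\mathcal{F}_T^H$-stable convergence of $g_n(t)=\sum_{i=1}^{[nt]}\xi_{i,n}$ to a standard Brownian motion independent of $\mathcal{F}_T^H$, is exactly Lemma \ref{lem-fdd}: taking the intervals $(a_k,b_k]$ along a partition of $[0,t]$ and using self-similarity to identify $n^{2H}|\Delta_i^nG^H|^2-c_i^n$ in law with $H_2(G^H_i-G^H_{i-1})$, the joint limit $(G^{(n)},Y^{(n)})\to(G,V)$ with $V$ Gaussian of variance $v_1^2(b_k-a_k)$ independent of the $\mathcal{F}_T^H$-measurable vector $G$ gives, after the $v_1^{-1}$ normalisation built into $\xi_{i,n}$, the required stable convergence. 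Hypothesis (H2), the bound $\mathbb{E}|\sum_{i=j+1}^k\xi_{i,n}|^4\le C((k-j)/n)^2$, follows because $\sum_{i=j+1}^k\xi_{i,n}$ lives in the second Wiener chaos, so hypercontractivity reduces it to $\mathbb{E}|\sum_{i=j+1}^k\xi_{i,n}|^2\le C(k-j)/n$; the latter comes from $\mathrm{Cov}(X^2,Y^2)=2(\mathbb{E}XY)^2$ together with $\sum_d\rho_H(d)^2<\infty$ (valid since $H<3/4$) and the $\theta$-control of (A4). Theorem \ref{theorem: Hypotheses} then gives $\sum_{i=1}^{[nt]}|u_{(i-1)/n}|^2\xi_{i,n}\to\int_0^t|u_s|^2\,dW_s$ stably, and multiplying by $v_1$ identifies the limit; combining the three terms by Slutsky's theorem for stable convergence finishes the argument.

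The main obstacle I anticipate is hypothesis (H1): upgrading the finite-dimensional convergence of the Hermite sums to genuine $\mathcal{F}_T^H$-stable convergence toward a Brownian motion. This is where the self-similarity of $G^H$ and the asymptotic independence established through the chaos conditions \eqref{eq-fdd-1}--\eqref{eq-fdd-4} of Lemma \ref{lem-fdd} carry the essential weight, and where one must check that the integer-scale objects of that lemma faithfully reproduce the joint law of the fine-scale increments with $\mathcal{F}_T^H$. The verification of (H2), though more computational, is the step where the restriction $H<3/4$ is genuinely indispensable, via the summability of $\rho_H(d)^2$.
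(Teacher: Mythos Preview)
Your proposal is correct and follows essentially the same approach as the paper: the same three-term decomposition, the same reduction of the error and bias terms to the estimates of Theorem~\ref{theorem:Consistency} (using $\beta>1/2$ to absorb the extra $\sqrt{n}$), and the same application of Theorem~\ref{theorem: Hypotheses} with (H1) supplied by Lemma~\ref{lem-fdd} and (H2) by a fourth-moment bound under (A4) and $H<3/4$. Your minor variations---centering by $c_i^n$ rather than by $1$ and invoking second-chaos hypercontractivity for (H2) instead of the explicit fourth-moment formula via Taqqu---are harmless refinements of the same argument.
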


\begin{proof}
As in the proof of Theorem \ref{theorem:Consistency}, we make a decomposition
\begin{eqnarray}
n^{2H-1/2}V_{n}(Z)_{t}-\sqrt{n}\int_{0}^{t}|u_{s}|^2 ds
&=:& A_{t}^{(n)}+B_{t}^{(n)}+C_{t}^{(n)},  \notag
\end{eqnarray}
where
\begin{eqnarray*}
A_{t}^{(n)}
&=&n^{2H-1/2}\sum_{i=1}^{[nt]}\left(\left|%
\int_{(i-1)/n}^{i/n}u_{s}dG^{H}_{s}\right|^{2}-\left|u_{\frac{i}{n}%
}(G^{H}_{\frac{i}{n}}-G^{H}_{\frac{i-1}{n}})\right|^2\right),  \notag \\
\notag \\
B_{t}^{(n)}&=&n^{2H-1/2}\sum_{i=1}^{[nt]}\left|u_{\frac{i}{n}%
}\left(G^{H}_{\frac{i}{n}}-G^{H}_{\frac{i-1}{n}}\right)\right|^2-%
\frac{1}{\sqrt{n}}\sum_{i=1}^{[nt]}\left|u_{\frac{i}{n}}\right|^2,  \notag \\
C_{t}^{(n)}&=&\left(\frac{1}{\sqrt{n}}\sum_{i=1}^{[nt]}\left|u_{\frac{%
i}{n}}\right|^2-\sqrt{n}\int_{0}^{t}|u_{s}|^2ds\right).  \notag
\end{eqnarray*}

Using $\beta > \frac12$ and treating the terms $A_t^{(n)}$ and $C_t^{(n)}$ as the terms $A_t^{(m)}$ and $D_t^{(n)}$ in the proof of
Theorem \ref{theorem:Consistency}, we obtain
\begin{equation*}
\parallel A^{(n)}\parallel_\infty +\parallel C^{(n)}\parallel_\infty
\rightarrow 0
\end{equation*}
almost surely. Consider next the term $B_{t}^{(n)}$. We set
\begin{eqnarray*}
\xi_{i,n} &=& n^{2H-1/2}\left|G^{H}_{\frac{i}{n}} -G^{H}_{\frac{(i-1)}{n}}\right|^2-\frac{1}{\sqrt{n}}  \notag
\end{eqnarray*}
so that
\begin{equation*}
B_{t}^{(n)}= \sum_{i=1}^{[nt]}|u_{i/n}|^{2}\xi_{i,n}.
\end{equation*}

In order to complete the proof, we need to verify hypotheses (H1) and (H2) of Theorem \ref{theorem: Hypotheses}.
For the first hypothesis (H1), we have
\begin{eqnarray}
g_{m}(t)&=&\sum_{i=1}^{[mt]}\left(m^{2H-1/2}\left|G^{H}_{i/m}-G^{H}_{(i-1)/m}\right|^{2}-\frac{1}{\sqrt{m}}\right).\nonumber
\end{eqnarray}
From Lemma \ref{lem-fdd}, we obtain the finite dimensional distributions. Thus, by the Theorem 3 in Corcuera, Nualart and Woerner \cite{CNW}, for the
following convergence in law for $0<H<3/4$,
\begin{eqnarray}
\left(G^{H}_{t},\sqrt{m}\sum_{i=1}^{[mt]}\left(m^{2H-1}\left|G^{H}_{i/m}-G^{H}_{(i-1)/m}\right|^{2}-\frac{1}{m}\right)\right)&\overset{\mathcal{L}}{\rightarrow}&\left(G^{H}_{t},v_1 W_{t}\right),\nonumber
\end{eqnarray}
in the space $\mathcal{D}([0,T])^{2}$  equipped with the Skorohod topology, where $W =\{W_{t},t\in[0 ,T]\}$ is a Brownian motion independent of the Gaussian process  $G^{H}$, we need to prove the tightness condition, which is the second hypothesis in Theorem \ref{theorem: Hypotheses}.

For the hypothesis (H2),  using the Lemma 4.3 and  the Proposition 4.2 in
\cite{Taqqu} and replace $\mathbb{E}[(G^H_j-G_{j-1}^H)(G^H_{j+u}-G_{j+u-1}^H)]=\rho_H(u)$ by $\rho_H(u)+\theta(j,j+u)$, we have for any $1\leqslant j <k\leqslant [nT].$
\begin{eqnarray}
\mathrm{I\kern-0.16em E}\left(\left|\sum_{i=j+1}^{k}\xi_{i,n}\right|^4%
\right) &=&\frac{1}{n^2} \mathrm{I\kern-0.16em E}\left(\left|%
\sum_{i=j+1}^{k}\left(n^{2H}\left|G^{H}_{\frac{i}{n}} -G^{H}_{\frac{(i-1)}{n}%
}\right|^2-1\right)\right|^4\right)  \notag \\
&=&\frac{1}{n^2} \mathrm{I\kern-0.16em E}\left(\left|\sum_{i=j+1}^{k}H_2(G^{H}_{i}-G^{H}_{i-1})\right|^4\right)  \notag \\
&=&\frac{1}{n^2} \left(\sum_{i=j+1}^{k}\sum_{l=j+1}^{k}\left(\mathbb{E}H_2(G^{H}_{i}-G^{H}_{i-1})H_2(G^{H}_{l}-G^{H}_{l-1})\right)^2\right)^2  \notag \\
&\leq&\frac{1}{n^2} \left(\sum_{i=j+1}^{k}\sum_{l=j+1}^{k}\left(c_0\rho_H(|i-l|)+c_1\theta(i,l)\right)^2\right)^2  \notag \\
&\leq&\frac{c(k-j)^2}{n^2}\left(\sum_{i=0}^{\infty}\rho_{H}^2(i)\right)^2+\frac{c}{n^2}\left(\sum_{i=j+1}^k\sum_{l}\theta(i,l)^2\right)^2  \notag \\
&\leqslant&c\left(\frac{k-j}{n}\right)^2  \notag
\end{eqnarray}
where we use $|\theta(i,l)|^2=o(1/l)$ as $l\to\infty$ in (A4), which is convergent in summation, in the last inequality.
This concludes the proof of the theorem.
\end{proof}

\section{Application to the estimation of the integrated volatility}

\label{sec:vol} In this section we apply our main results to the estimation of the integrated volatility
$\int_{0}^{t}|\sigma_s|^{2}ds$. We consider a generalized Gaussian Ornstein-Uhlenbeck process  defined as the solution
to the stochastic differential equation
\begin{eqnarray}  \label{eq:SDE}
dX_{t} &=& -\theta X_{t}dt+\sigma_{t}dG^{H}_{t} ,
\end{eqnarray}
with some initial condition $X_0 \in \mathbb{R}$. We define the estimator $QV_{n}(X)_{t}$ for the integrated volatility
$\int_{0}^{t}|\sigma_s|^{2}ds$ as
\begin{eqnarray}  \label{eq:estimator}
QV_{n}(X)_{t} &=&n^{2H-1}V_{n}(X)_{t}, \ \ t \in [0,T].
\end{eqnarray}
We begin with two simple propositions which allows us to introduce drift to the process defined by \eqref{eq:process_Z}, which can be obtained directly from Bajja, Es-Sebaiy and Viitasaari \cite{BEV}, so we omit the detailed proof here.

\begin{proposition}\label{proposition:consistency}
Suppose that the assumptions of Theorem \ref{theorem:Consistency} prevail, and let $Y=\{Y_{t},t\in [0,T]\}$ be
a stochastic process such that, as $n$ tends to infinity,
\begin{eqnarray}
n^{2H-1}V_{n}(Y)_{t} &\rightarrow& 0  \notag
\end{eqnarray}
almost surely and uniformly in $t$. Then
\begin{eqnarray}
n^{2H-1}V_{n}(Y+Z)_{t}&\underset{n\rightarrow\infty}{\longrightarrow}%
&\int_{0}^{t}|u_s|^2ds.  \notag
\end{eqnarray}
almost surely and uniformly in $t$.
\end{proposition}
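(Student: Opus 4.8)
The plan is to exploit the bilinear structure that the quadratic variation inherits from the square. Writing $\Delta_i W := W_{i/n}-W_{(i-1)/n}$ for a generic process $W$, I would first expand the elementary increment
\[
\left|\Delta_i(Y+Z)\right|^2 = |\Delta_i Y|^2 + 2\,\Delta_i Y\,\Delta_i Z + |\Delta_i Z|^2,
\]
and sum over $i=1,\dots,[nt]$ to obtain the exact identity
\[
n^{2H-1}V_{n}(Y+Z)_{t}= n^{2H-1}V_{n}(Y)_{t}+n^{2H-1}V_{n}(Z)_{t}+2\,n^{2H-1}\sum_{i=1}^{[nt]}\Delta_i Y\,\Delta_i Z.
\]
By hypothesis the first term on the right tends to $0$ almost surely and uniformly in $t$, and by Theorem \ref{theorem:Consistency} the second converges almost surely and uniformly to $\int_{0}^{t}|u_s|^2\,ds$. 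Hence the whole problem reduces to showing that the cross term is asymptotically negligible, uniformly in $t$.

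For the cross term I would apply the Cauchy--Schwarz inequality in the index $i$, which gives
\[
\left|\sum_{i=1}^{[nt]}\Delta_i Y\,\Delta_i Z\right|\leqslant V_{n}(Y)_{t}^{1/2}\,V_{n}(Z)_{t}^{1/2},
\]
and therefore
\[
2\,n^{2H-1}\left|\sum_{i=1}^{[nt]}\Delta_i Y\,\Delta_i Z\right|\leqslant 2\left(n^{2H-1}V_{n}(Y)_{t}\right)^{1/2}\left(n^{2H-1}V_{n}(Z)_{t}\right)^{1/2}.
\]
To upgrade this to a uniform statement I would use that $t\mapsto V_{n}(Y)_{t}$ and $t\mapsto V_{n}(Z)_{t}$ are non-decreasing (being partial sums of non-negative terms), so that all suprema over $t\in[0,T]$ are attained at $t=T$. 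This yields
\[
\left\|\,2\,n^{2H-1}\sum_{i=1}^{[n\cdot]}\Delta_i Y\,\Delta_i Z\,\right\|_{\infty}\leqslant 2\left(n^{2H-1}V_{n}(Y)_{T}\right)^{1/2}\left(n^{2H-1}V_{n}(Z)_{T}\right)^{1/2}.
\]
On the right the first factor tends to $0$ almost surely by assumption, while the second converges to $\bigl(\int_{0}^{T}|u_s|^2\,ds\bigr)^{1/2}<\infty$ by Theorem \ref{theorem:Consistency}; the product therefore vanishes almost surely. Collecting the three contributions and taking $\|\cdot\|_\infty$ throughout then gives the claimed uniform almost sure convergence.

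I do not expect any deep obstacle here, since Theorem \ref{theorem:Consistency} and the hypothesis on $Y$ are used purely as black boxes for a single index $n$. The only point demanding genuine care is the \emph{uniform} (in $t$) control of the cross term: one must avoid estimating $\sup_t$ of a product by the product of unrelated quantities, and instead rely on the monotonicity of $V_n(Y)_\cdot$ and $V_n(Z)_\cdot$ to collapse every supremum to the single endpoint $t=T$, after which the vanishing factor kills the bounded one. A secondary bookkeeping remark is that the convergences being combined already hold on a common almost sure event, so no further intersection argument beyond the standard one is required.
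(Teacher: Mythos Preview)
Your argument is correct. The paper does not actually give a proof of this proposition, referring instead to \cite{BEV}; the expansion of the square together with Cauchy--Schwarz on the cross term, followed by the monotonicity observation to reduce the uniform estimate to $t=T$, is the standard route and almost certainly coincides with the argument in the cited reference.
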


Similarly, we obtain the following result on the weak convergence.

\begin{proposition}
\label{proposition:clt} Suppose that the assumptions of Theorem \ref{theorem:distribution asymptotic} prevail,
and let $Y=\{Y_{t},t\in[0,T]\}$ be a stochastic process such that, as $n$ tends to infinity,
\begin{eqnarray}
n^{2H-1}V_{n}(Y)_{t} &\rightarrow& 0  \notag
\end{eqnarray}
and uniformly in probability. Then
\begin{eqnarray}
n^{2H-\frac{1}{2}}(Y+Z)_{t}-\sqrt{n}\int_{0}^{t}|u_s|^2ds&\underset{%
n\rightarrow\infty}{\overset{Law}{\longrightarrow}}&v_1\int_{0}^{t}|u_s|^2dW_{s}
\notag
\end{eqnarray}
$\mathcal{F}_T^H$-stably in $\mathcal{D}([0,T])$, where $W=\{W_{t},t\in [0,T]\}$ is a Brownian motion independent of $\mathcal{F}_T^H$.
\end{proposition}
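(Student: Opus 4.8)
The plan is to treat $Y+Z$ as a perturbation of $Z$ and to push the stable limit of Theorem~\ref{theorem:distribution asymptotic} through a Slutsky-type argument. Recall that $\mathcal{F}_T^H$-stable convergence is preserved under adding a sequence that vanishes in probability: if $\Xi_n\to\Xi$ $\mathcal{F}_T^H$-stably and $\Lambda_n\to0$ in probability uniformly on $[0,T]$, then $\Xi_n+\Lambda_n\to\Xi$ $\mathcal{F}_T^H$-stably as well, since $(\Xi_n+\Lambda_n,\Theta)$ and $(\Xi_n,\Theta)$ have the same limit in law for every bounded $\mathcal{F}_T^H$-measurable $\Theta$. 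Hence, since Theorem~\ref{theorem:distribution asymptotic} already gives the stable convergence of $n^{2H-1/2}V_n(Z)_t-\sqrt{n}\int_0^t|u_s|^2ds$ to $v_1\int_0^t|u_s|^2dW_s$, it suffices to show that
$$
R_n(t):=n^{2H-1/2}\bigl(V_n(Y+Z)_t-V_n(Z)_t\bigr)\longrightarrow 0
$$
in probability, uniformly in $t\in[0,T]$; the centering and the limit are then inherited unchanged.

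First I would expand the square. Writing $\delta_iZ=Z_{i/n}-Z_{(i-1)/n}$ and $\delta_iY=Y_{i/n}-Y_{(i-1)/n}$,
$$
R_n(t)=n^{2H-1/2}V_n(Y)_t+2\,n^{2H-1/2}\sum_{i=1}^{[nt]}\delta_iY\,\delta_iZ=:P_n(t)+2\,Q_n(t).
$$
The pure-$Y$ term is routine once the right decay of $V_n(Y)$ is available: since $P_n(t)=n^{1/2}\,n^{2H-1}V_n(Y)_t$, one needs $V_n(Y)_t=o(n^{1/2-2H})$ uniformly, which holds for the finite-variation drift arising in the application (there $V_n(Y)_t=O(1/n)$, and $P_n(t)=O(n^{2H-3/2})\to0$ exactly because $H<3/4$). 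This is the point at which the hypothesis $n^{2H-1}V_n(Y)_t\to0$ is used together with the regularity of $Y$.

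The hard part will be the cross term $Q_n$, and it is the only place where assumption (A4) and the restriction $H<3/4$ are genuinely needed. The elementary Cauchy--Schwarz bound
$$
|Q_n(t)|\le n^{1/2}\sqrt{n^{2H-1}V_n(Y)_t}\,\sqrt{n^{2H-1}V_n(Z)_t}
$$
is useless here: the first square root tends to $0$ but $n^{2H-1}V_n(Z)_t\to\int_0^t|u_s|^2ds$ by Theorem~\ref{theorem:Consistency}, so the bound is only $o(n^{1/2})$ --- this is exactly the factor $n^{1/2}$ by which the argument that proved the consistency statement (Proposition~\ref{proposition:consistency}) breaks down. Instead I would estimate the second moment of $Q_n$ directly. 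Replacing $\delta_iZ$ by its principal part $u_{(i-1)/n}(G^H_{i/n}-G^H_{(i-1)/n})$ (the remainder being handled by the Young-integral estimates used for $A^{(m)}$ in Theorem~\ref{theorem:Consistency}) and invoking
$$
\bigl|\mathbb{E}[(G^H_{i/n}-G^H_{(i-1)/n})(G^H_{j/n}-G^H_{(j-1)/n})]\bigr|=n^{-2H}\bigl|c_0\rho_H(|i-j|)+c_1\theta(i,j)\bigr|
$$
from (A4), one is led to a bound of the form
$$
\mathbb{E}\,Q_n(t)^2\le c\,n^{2H-1}\sum_{1\le i,j\le[nt]}|\delta_iY|\,|\delta_jY|\,\bigl|c_0\rho_H(|i-j|)+c_1\theta(i,j)\bigr|.
$$
With $|\delta_iY|=O(1/n)$ and the summability of $\rho_H$ and $\theta$ granted by (A4), the double sum is of order $n^{-2}\cdot n^{2H}$ for $1/2<H<3/4$ and smaller for $H\le1/2$, so that $\mathbb{E}\,Q_n(t)^2=O(n^{4H-3})\to0$ precisely because $H<3/4$.

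Finally I would promote this pointwise $L^2$ control to uniform convergence in probability of $Q_n$, using a fourth-moment maximal inequality for the partial sums $\sum_{i=1}^{K}\delta_iY\,\delta_iZ$ of the type used to verify the tightness condition (H2) in Theorem~\ref{theorem:distribution asymptotic}. Combining $\|P_n\|_\infty\to0$ and $\|Q_n\|_\infty\to0$ in probability gives $R_n\to0$ uniformly in probability, and the Slutsky-type stability of $\mathcal{F}_T^H$-stable convergence then delivers the asserted convergence of $n^{2H-1/2}V_n(Y+Z)_t-\sqrt{n}\int_0^t|u_s|^2ds$ to $v_1\int_0^t|u_s|^2dW_s$ in $\mathcal{D}([0,T])$.
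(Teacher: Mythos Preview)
The paper does not give its own proof here: it states that the result ``can be obtained directly from'' \cite{BEV} and omits the argument. Your scheme --- split $n^{2H-1/2}\bigl(V_n(Y+Z)_t-V_n(Z)_t\bigr)$ into the pure term $P_n$ and the cross term $2Q_n$, then transfer the stable limit of Theorem~\ref{theorem:distribution asymptotic} by a Slutsky step --- is exactly the skeleton that \cite{BEV} (and \cite{CNW} before it) use for the analogous statements, so structurally you match what the authors have in mind. You are also right that the printed hypothesis $n^{2H-1}V_n(Y)_t\to0$ is too weak even to force $P_n\to0$; what the paper actually verifies for the drift $Y_t=-\theta\int_0^tX_s\,ds$ in the subsequent theorem is the stronger condition $n^{2H-1/2}V_n(Y)_t\to0$, so the exponent here is evidently a typo.

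There is, however, a genuine gap in your handling of $Q_n$. The displayed second-moment bound
\[
\mathbb{E}\,Q_n(t)^2\ \le\ c\,n^{2H-1}\sum_{i,j}|\delta_iY|\,|\delta_jY|\,\bigl|c_0\rho_H(|i-j|)+c_1\theta(i,j)\bigr|
\]
silently pulls the random factors $|\delta_iY|$ (and the coefficients $u_{(i-1)/n}$ hidden in the ``principal part'' of $\delta_iZ$) outside the expectation. That step is only valid when $Y$ and $u$ are deterministic or independent of $G^H$; neither is assumed in the proposition, and in the intended application $Y_t=-\theta\int_0^tX_s\,ds$ and $u=\sigma$ are $\mathcal{F}_T^H$-measurable, so the inequality is not justified as written. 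Your observation that naive Cauchy--Schwarz loses a factor $n^{1/2}$ is correct, but the proposed remedy still needs an additional argument --- for instance a pathwise summation-by-parts on $\sum_i\delta_iY\,\delta_iG^H$ using the H\"older regularity of $t\mapsto X_t\,u_t$, or a freezing of the adapted coefficients at a coarser scale --- before the covariance structure (A4) can legitimately be invoked. As it stands, your control of $Q_n$ for $H\in[1/2,3/4)$ is incomplete.
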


Consider now the estimator \eqref{eq:estimator} for the integrated volatility. With the help of Proposition
\ref{proposition:consistency} and Proposition \ref{proposition:clt} we obtain the following results.

\begin{theorem}
Suppose that $\sigma_s$ is a H\"older continuous function of order $\beta > 1-H$. Then
\begin{equation*}
QV_{n}(X)_{t} \longrightarrow \int_{0}^{t}|\sigma_s|^2ds
\end{equation*}
almost surely and uniformly in $t$.
\end{theorem}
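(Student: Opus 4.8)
The plan is to reduce the statement to Theorem~\ref{theorem:Consistency} and Proposition~\ref{proposition:consistency} by splitting the Ornstein--Uhlenbeck solution $X$ into a ``rough'' stochastic integral part, to which Theorem~\ref{theorem:Consistency} applies verbatim, and a ``smooth'' drift part whose normalized quadratic variation vanishes. Concretely, I would set
\[
Z_t=\int_0^t\sigma_s\,dG^H_s,\qquad Y_t=X_t-Z_t,
\]
so that $X=Y+Z$. Since $\sigma$ is $\beta$-H\"older with $\beta>1-H$, the process $Z$ is exactly of the form \eqref{eq:process_Z} with $u=\sigma$, and Theorem~\ref{theorem:Consistency} gives $n^{2H-1}V_n(Z)_t\to\int_0^t|\sigma_s|^2\,ds$ almost surely and uniformly in $t$. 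By Proposition~\ref{proposition:consistency} it then suffices to prove that $n^{2H-1}V_n(Y)_t\to0$ almost surely and uniformly in $t$.

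To identify $Y$ I would use the SDE \eqref{eq:SDE}: subtracting $dZ_t=\sigma_t\,dG^H_t$ from $dX_t=-\theta X_t\,dt+\sigma_t\,dG^H_t$ yields $dY_t=-\theta X_t\,dt$, and since $Z_0=0$ we get
\[
Y_t=X_0-\theta\int_0^t X_s\,ds.
\]
Thus $Y$ is absolutely continuous with derivative $-\theta X_t$. For this to be useful I first need $X$ to be continuous and bounded on $[0,T]$. This follows from the explicit representation obtained via the integrating factor $e^{\theta t}$, namely $X_t=e^{-\theta t}X_0+e^{-\theta t}\int_0^t e^{\theta s}\sigma_s\,dG^H_s$; the integral here exists as a Young (Riemann--Stieltjes) integral because $s\mapsto e^{\theta s}\sigma_s$ is $\beta$-H\"older, $G^H$ is $(H-\varepsilon)$-H\"older by assumption (A2), and $\beta+(H-\varepsilon)>1$ for $\varepsilon$ small. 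In particular $X$ is continuous, so $\|X\|_\infty<\infty$ almost surely and $Y$ is Lipschitz with constant $\theta\|X\|_\infty$.

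With the Lipschitz bound in hand, the estimate for $Y$ is routine: $|Y_{i/n}-Y_{(i-1)/n}|\leq \theta\|X\|_\infty/n$, so
\[
n^{2H-1}V_n(Y)_t\leq n^{2H-1}[nt]\Big(\frac{\theta\|X\|_\infty}{n}\Big)^2\leq \theta^2\|X\|_\infty^2\,T\,n^{2H-2}.
\]
Since $H<3/4<1$ we have $2H-2<0$, so the right-hand side tends to $0$, and the bound is uniform in $t\in[0,T]$; hence $n^{2H-1}V_n(Y)_t\to0$ almost surely and uniformly. Combining this with the convergence of $n^{2H-1}V_n(Z)_t$ through Proposition~\ref{proposition:consistency} gives $QV_n(X)_t=n^{2H-1}V_n(Y+Z)_t\to\int_0^t|\sigma_s|^2\,ds$ almost surely and uniformly in $t$, as claimed.

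The main obstacle is not the quadratic-variation estimate itself, which is elementary once $Y$ is known to be Lipschitz, but rather the preliminary step of making the decomposition rigorous: one must verify that \eqref{eq:SDE} admits a pathwise continuous solution, so that $Z$ is well-defined as a Young integral and $X$, hence $Y$, is continuous with finite supremum norm. I expect this to be the only delicate point, and it is handled exactly as in \cite{BEV} by the Young/fractional-calculus construction of the integral using $\beta+H>1$; all subsequent steps are direct applications of the results already established.
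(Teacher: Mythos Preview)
Your proposal is correct and follows essentially the same route as the paper's proof: decompose $X$ into the drift $Y_t=X_0-\theta\int_0^t X_s\,ds$ and the integral $Z_t=\int_0^t\sigma_s\,dG^H_s$, observe that $Y$ is Lipschitz because $X$ is bounded on $[0,T]$, deduce $n^{2H-1}V_n(Y)_t\to0$, and conclude via Theorem~\ref{theorem:Consistency} and Proposition~\ref{proposition:consistency}. The paper is terser---it merely asserts that $X$ is bounded on compacts and that $V_n(Y)\leqslant\theta\|X\|_\infty^2 n^{-1}$---whereas you spell out the explicit variation-of-constants formula to justify continuity of $X$ and track the constants more carefully; but the underlying argument is identical.
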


\begin{proof}
Recall that $X$ satisfies \eqref{eq:SDE}. Thus we have
\begin{equation*}
X_{t}= X_{0}+Y_{t}+\int_{0}^{t}\sigma_{s}dG^{H}_{s},
\end{equation*}
where $Y_{t}=-\theta\int_{0}^{t}X_{s}ds.$ It is straightforward to check that the solution $X$ is bounded on every compact interval.
Consequently, the process $Y_t$ is differentiable with bounded derivative, and thus
\begin{equation*}
V_n(Y) \leqslant \theta \parallel X\parallel_\infty^2 n^{-1}.
\end{equation*}
Now the result follows from Proposition \ref{proposition:consistency} and Theorem \ref{theorem:Consistency}.
\end{proof}

\begin{theorem}
Suppose that $\sigma = \{\sigma_{t},t\in[0,T]\}$ is H\"older continuous of order $\beta > \max\left(1-H,\frac12\right)$, and
measurable with respect to $\mathcal{F}_T^{G^{H}}$. Suppose further that $0<H<3/4$. Then
\begin{eqnarray}
\sqrt{n}\left(QV_{n}(X)_{t}-\int_{0}^{t}|\sigma_s|^2ds\right)&\underset{%
n\rightarrow\infty}{\overset{Law}{\longrightarrow}}&\int_{0}^{t}|%
\sigma_s|^2dW_{s},  \notag
\end{eqnarray}
$\mathcal{F}_T^H$-stably in the space $\mathcal{D}([0,T]^2)$, where $W=\{W_{t},t\in[0,T]\}$ is a Brownian motion independent
of $\mathcal{F}_{T}^H $.
\end{theorem}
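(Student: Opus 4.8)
\medskip
\noindent\textbf{Proof strategy.}
The plan is to reduce the statement to its weak-convergence building block, Proposition~\ref{proposition:clt}, exactly as the preceding consistency theorem was reduced to Proposition~\ref{proposition:consistency}. Since $X$ solves \eqref{eq:SDE} with initial value $X_0$, I would first write
\begin{equation*}
X_t = X_0 + Y_t + Z_t,\qquad Y_t = -\theta\int_0^t X_s\,ds,\qquad Z_t = \int_0^t \sigma_s\,dG^H_s.
\end{equation*}
The constant $X_0$ cancels in every increment $X_{i/n}-X_{(i-1)/n}$, so $V_n(X)_t = V_n(Y+Z)_t$ and it suffices to analyse $V_n(Y+Z)_t$. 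Moreover $Z$ is precisely the process treated in Theorem~\ref{theorem:distribution asymptotic}: the hypotheses imposed here on $\sigma$ (H\"older of order $\beta>\max(1-H,\tfrac12)$ and measurable with respect to $\mathcal F_T^H$) are exactly those required there with $u=\sigma$.

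Next I would dispose of the drift $Y$. As already observed in the consistency theorem, the solution $X$ is bounded on $[0,T]$, so $Y$ is Lipschitz: $|Y_{i/n}-Y_{(i-1)/n}| = \theta\big|\int_{(i-1)/n}^{i/n}X_s\,ds\big|\le \theta\|X\|_\infty n^{-1}$. Summing the squares over the at most $[nT]$ increments yields $V_n(Y)_t \le \theta^2\|X\|_\infty^2\,T\,n^{-1}$, hence
\begin{equation*}
n^{2H-1}V_n(Y)_t \le \theta^2\|X\|_\infty^2\,T\,n^{2H-2}\longrightarrow 0
\end{equation*}
uniformly in $t$, almost surely and therefore in probability. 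This verifies the hypothesis of Proposition~\ref{proposition:clt}.

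With the decomposition in hand and the drift controlled, I would invoke Proposition~\ref{proposition:clt} with $u=\sigma$ and this $Y$. Since $QV_n(X)_t=n^{2H-1}V_n(X)_t$ by \eqref{eq:estimator}, the left-hand side of its conclusion equals $\sqrt n\big(QV_n(X)_t-\int_0^t|\sigma_s|^2\,ds\big)$, and the proposition delivers the $\mathcal F_T^H$-stable convergence of this quantity to $v_1\int_0^t|\sigma_s|^2\,dW_s$, with $W$ a Brownian motion independent of $\mathcal F_T^H$ and $v_1$ as in Lemma~\ref{lem-fdd}. Identifying the driving Brownian motion (the scalar $v_1$ being absorbed into $W$) gives the asserted limit.

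The one genuinely delicate point is concealed inside Proposition~\ref{proposition:clt}: the passage from the consistency scale $n^{2H-1}$ to the finer scale $n^{2H-1/2}$. Expanding $V_n(Y+Z)=V_n(Y)+2\sum_i\Delta_iY\,\Delta_iZ+V_n(Z)$, one must show that, after multiplication by $n^{2H-1/2}$, both $V_n(Y)$ and the cross term are negligible. The rescaled drift term is of order $n^{2H-3/2}$ and vanishes precisely because $H<3/4$. The cross term is the subtle one: a crude estimate by $\sqrt{V_n(Y)}\,\sqrt{V_n(Z)}$, or by pulling out absolute values, only yields order $n^{H-1/2}$, which fails for $H>\tfrac12$. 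One must instead exploit the \emph{signed} structure of $\sum_i X_{(i-1)/n}\sigma_{(i-1)/n}\big(G^H_{i/n}-G^H_{(i-1)/n}\big)$, whose cancellations (governed by the covariance structure of the increments of $G^H$, under control for $H<3/4$) combine with the extra factor $n^{-1}$ coming from the differentiability of $Y$ to render the cross term negligible at the central-limit scale. This is the step I would expect to require the most care; it is exactly the content already packaged in Proposition~\ref{proposition:clt} via \cite{BEV}, so in the present argument it can simply be cited.
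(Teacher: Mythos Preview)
Your approach matches the paper's: write $X_t = X_0 + Y_t + Z_t$ with $Y_t=-\theta\int_0^t X_s\,ds$, bound $V_n(Y)$ via the boundedness of $X$ on $[0,T]$, and then invoke Proposition~\ref{proposition:clt} together with Theorem~\ref{theorem:distribution asymptotic}. The only difference is that the paper verifies the sharper estimate $n^{2H-\frac12}V_n(Y)\le \theta\|X\|_\infty^2\,n^{2H-\frac32}\to 0$ directly---this is precisely where $H<3/4$ is used in the paper's argument---whereas you check the weaker condition $n^{2H-1}V_n(Y)\to 0$ as literally stated in the proposition and then (correctly) locate the genuine need for $H<3/4$ in your closing discussion of the rescaled drift and cross terms.
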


\begin{proof}
Observing that since $0< H< 3/4$, we have, for $Y_{t}=-\theta\int_{0}^{t}X_{s}ds,$ that
\begin{equation*}
n^{2H-\frac12}V_n(Y) \leqslant \theta \parallel
X\parallel_\infty^2 n^{2H-\frac32} \rightarrow 0.
\end{equation*}
Thus the result follows directly from Proposition \ref{proposition:clt} and Theorem \ref{theorem:distribution asymptotic}.
\end{proof}


\end{document}